\theoremstyle{plain} 
\newtheorem*{thm'}{Theorem}                    
\newtheorem*{corol}{Corollary}
\newtheorem{thm}{Theorem}[section]
\newtheorem{prop}[thm]{Proposition}
\newtheorem{lemme}[thm]{Lemma}
\newtheorem{cor}[thm]{Corollary}
\theoremstyle{remark}                                             
\newtheorem{rem}[thm]{Remark}
\newtheorem{exemple}[thm]{Example}
\newtheorem{exemples}[thm]{Examples}
\theoremstyle{definition}                                         
\newtheorem{definition}[thm]{Definition}
\newtheorem*{defin}{Definition}
\newtheorem{paragr}[thm]{}
\newcommand{\cat}{{\mathcal{C} \mspace{-2.mu} \it{at}}}
\newcommand{\Hom}{\operatorname{\mathsf{Hom}}}
\newcommand{\sHom}{\operatorname{\underline{\mathsf{Hom}}}}
\newcommand{\op}[1]{{#1}^{\circ}}
\newcommand{\Int}{\textstyle{\int}}
\newcommand{\ob}{\operatorname{\mathsf{Ob}}}
\newcommand{\W}{\mathcal{W}}
\newcommand{\Winf}{\W_{\infty}}
\newcommand{\fl}{\operatorname{\mathsf{Fl}}}
\newcommand{\hot}{\mathsf{Hot}}
\newcommand{\smp}[1]{ \varDelta_{#1}}
\newcommand{\e}{\varepsilon}
\renewcommand{\longmapsto}{{\hskip -2.5pt\xymatrixcolsep{1.3pc}\xymatrix{\ar@{|->}[r]&}\hskip -2.5pt}}
\renewcommand{\mapsto}{{\hskip -2.5pt\xymatrixcolsep{.9pc}\xymatrix{\ar@{|->}[r]&}\hskip -2.5pt}}
\newcommand{\toto}{{\hskip -2.5pt\xymatrixcolsep{1.3pc}\xymatrix{\ar[r]&}\hskip -2.5pt}}
\renewcommand{\to}{{\hskip -2.5pt\xymatrixcolsep{1pc}\xymatrix{\ar[r]&}\hskip -2.5pt}}
\renewcommand{\hookrightarrow}{{\hskip -1.5pt\raise 1.5pt\vbox{\xymatrixcolsep{.9pc}\xymatrix{\ar@{^{(}->}[r]&}}\hskip -3.5pt}}
\newcommand{\Cofibr}[1]{\theta^{}_{#1}}
\newcommand{\ind}[2]{\widetilde{#1}}
\newcommand{\ctlh}[1]{#1^{}_!}
\newcommand{\ctlhprim}[1]{#1'_!}
\newcommand{\Asph}{\mathcal{A}}
\newcommand{\FAsph}{\mathcal{A}sph}
\newcommand{\FAS}[1]{\FAsph(#1)}
\newcommand{\FAsphinv}{\FAsph^{-1}}
\newcommand{\FASinv}[1]{\FAS{#1}^{-1}}
\newcommand{\FonctAsph}{\FAsph^{}_{\hskip -2pt\Asph}}
\newcommand{\FonctAS}[1]{\FonctAsph(#1)}
\newcommand{\CHot}{\operatorname{\mathsf{Hot}}}
\newcommand{\CHOT}[1]{\CHot(#1)}
\newcommand{\CatHot}{\CHot^{}_{\Asph}}
\newcommand{\CatHOT}[1]{\CatHot(#1)}
\newcommand{\CatAHmtp}{{\,\overline{\cat}\,}}
\newcommand{\imdir}[1]{#1^{}_!}
\newcommand{\imdirh}[1]{#1^{}_!}
\newcommand{\Dbb}{\mathbb{D}}
\title[Asphericity structures, smooth functors, and fibrations]{Asphericity structures, smooth functors, \break and fibrations}
\author{Georges Maltsiniotis}
\address{Institut de Math\'ematiques de Jussieu\\
Universit\'e Paris 7 Denis Diderot\\
\hfill\break\indent Case Postale~7012\\
2, place Jussieu\\
F-75251, PARIS cedex 05\\
FRANCE}
\email{maltsin@math.jussieu.fr}
\urladdr{http://www.math.jussieu.fr/\raise -3.3pt\vbox{\hbox{$\widetilde{ \ }\,$}}maltsin/}
\subjclass[2000]{14F20, 18E35, 18F20, 18G10, 18G50, 18G55, 55P10, 55P60, 55U35, 55U40}
\dedicatory{This paper was translated from French by Jonathan Chiche.}
\begin{document}

\begin{abstract}
The aim of this paper is to generalize Grothendieck's theory of
smooth functors in order to include within this framework the theory of
fibered categories. We obtain in particular a new characterization of fibered categories.
\end{abstract}

\maketitle

\section*{Introduction}

In ``Pursuing Stacks''~\cite{PS}, Grothendieck defines the notions of
proper functors and smooth functors by analogy with
the cohomological properties of proper morphisms and smooth morphisms between schemes.
If
$$
\mathcal{D}\,=\quad
\raise 20pt\vbox{
\xymatrixrowsep{1.8pc}
\xymatrix{
X'\ar[r]^{g}\ar[d]_(.47){f'}
&X\ar[d]^{f}
\\
Y'\ar[r]_{h}
&Y
}
}
\qquad\qquad
$$
is a commutative square of schemes and $M$ is an abelian \'etale sheaf on $X$,
there is a canonical ``base change'' morphism
$$h^*Rf_*(M)\toto Rf'_*\,g^*(M)\quad.$$
Proper and smooth base change theorems
state~\cite[lectures 12, 13 and 16]{SGA4}
that if the square $\mathcal{D}$ is cartesian,
then this morphism is an isomorphism in the following two cases:
\begin{itemize}
\item[(\emph{a})] $f$ is proper and $M$ is of torsion;
\item[(\emph{b})] $h$ is smooth, $M$ is of torsion prime to the
residual exponents of $Y$, and $f$ is quasi-compact and quasi-separated.
\end{itemize}
\medbreak

We denote by $\cat$ the category of small categories and by $\Winf$ the
class of arrows in $\cat$ whose image under the nerve functor
is a simplicial weak equivalence, \emph{i.e.}~a morphism of simplicial sets 
whose topological realization is a
homotopy equivalence. The localized category $\W^{-1}_\infty\cat$,
obtained by formally inverting the arrows which belong to $\Winf$, 
is then equivalent to the homotopy category $\hot$ of CW\nobreakdash-complexes.
For every small category $I$, define $\Dbb(I)$ by
$$\Dbb(I)=\bigl(\W^{\op{I}}_\infty\bigr)^{-1}\sHom(\op{I},\cat)\quad,$$
where $\sHom(\op{I},\cat)$ denotes the category of presheaves on
$I$ with values in $\cat$, \emph{i.e.}~the contravariant functors
from $I$ to $\cat$. Let
$\W^{\op{I}}_\infty$ be the class of natural transformations between functors
from $\op{I}$ to $\cat$ which are componentwise in $\Winf$:
$$\W^{\op{I}}_\infty=\{\alpha\in\fl(\sHom(\op{I},\cat))\,|\,\alpha_i\in\Winf,i\in\ob(I)\}\quad.$$
For every arrow $u:I\to J$ of $\cat$, the inverse image functor
$$\sHom(\op{J},\cat)\toto\sHom(\op{I},\cat)\quad,\qquad
F\longmapsto F\circ\op{u}\quad,$$
defines a functor
$$u^*:\Dbb(J)\toto\Dbb(I)\quad.$$
By a result of Alex Heller~\cite{Hel}, this functor has
a left and a right adjoint
$$u^{}_!:\Dbb(I)\toto\Dbb(J)\qquad\hbox{and}\qquad u^{}_*:\Dbb(I)\toto\Dbb(J)$$
respectively.
\medbreak

Grothendieck defines the notion of proper (resp.~smooth) functor
as follows.

\begin{defin}
A functor between small categories $u:A\to B$ (resp.~$w:B'\to B$) is said
to be \emph{proper} (resp.~\emph{smooth}) if for every cartesian square
$$
\xymatrix{
&A'\ar[r]^{v}\ar[d]_(.45){u'}
&A\ar[d]^{u}
\\
&B'\ar[r]_{w}
&B
&\hskip -20pt,
}$$
the base change morphism
$$w^*u^{}_*\toto u'_*v^*\quad$$
(or, equivalently, 
$$v^{}_!u'{}^*\toto u^*w^{}_!\quad,$$
transpose of the previous morphism) is an isomorphism, and this property
remains true after any base change. 
\end{defin}

Grothendieck obtains simple characterizations of proper
functors and smooth functors, and he notices that his theory
relies only on a small number of formal properties of $\Winf$,
the most important one being Quillen's Theorem~A~\cite{Qu}. This
leads him to introduce the notion of basic localizer.
\medbreak

A \emph{basic localizer} is a class $\W$ of arrows
of $\cat$ satisfying the following properties.
\begin{itemize}
\item[\textsf{Loc1}] (Weak saturation.) \emph{a}) Identities belong to the class $\W$.

\emph{b}) If two out of the three arrows in a commutative triangle
belong to $\W$, then so does the third.

\emph{c}) If $i$ is an arrow of $\cat$ which has
a retraction $r$ and if $ir$ belongs to $\W$, then $i$ belongs to $\W$.
\item[\textsf{Loc2}] If $A$ is a small category which has a final object, then
the functor $A\to e$ from $A$ to the final category $e$ belongs to~$\W$.
\item[\textsf{Loc3}] (Relative Quillen's Theorem A.)
For every commutative triangle 
$$
\xymatrixcolsep{1pc}
\xymatrix{
A\ar[rr]^{u}\ar[dr]_{v}
&&B\ar[dl]^{w}
\\
&C
}$$
of $\cat$, 
if for every object $c$ of $C$, the functor $u/c:A/c\to B/c$
(where $A/c= A\times_C(C/c)$ and $B/c=B\times_C(C/c)$ stand for the
``comma categories'' of objects over~$c$)
induced by $u$ belongs to $\W$, then so does $u$.
\end{itemize}
\medbreak

Grothendieck conjectures in~\cite{PS} that
$\Winf$ is the smallest basic localizer.
This conjecture is proved by D.\nobreakdash-C.~Cisinski in~\cite{C2}.
\medbreak

A small category $A$ is said to be $\W$\nobreakdash-aspheric if the
morphism $A\to e$ belongs to $\W$. A functor between small categories 
$A\to B$ is said to be $\W$\nobreakdash-aspheric if for every object $b$ of
$B$, the category $A/b$ is $\W$\nobreakdash-aspheric. It follows
from \textsf{Loc3} that a $\W$\nobreakdash-aspheric functor belongs to 
$\W$. As in the case $\W=\Winf$, we define, for every
small category $I$, the category $\Dbb(I)$ by
$$\Dbb(I)=\bigl(\W^{\op{I}}\bigr)^{-1}\sHom(\op{I},\cat)\quad,$$
and for every arrow $u:I\to J$ of $\cat$, we can show by elementary
methods~\cite{Ma} that the inverse image functor $u^*:\Dbb(J)\to\Dbb(I)$
has a left adjoint 
\hbox{$u^{}_!:\Dbb(I)\to\Dbb(J)$}. We can therefore define the notion 
of $\W$\nobreakdash-proper and $\W$\nobreakdash-smooth functors as in the case of $\W=\Winf$, 
using base change morphisms related to $u^{}_!$.
\medbreak

In fact, the theory developed in D.\nobreakdash-C.~Cisinski's thesis~\cite{C1}
shows also the existence of a right adjoint $u^{}_*:\Dbb(I)\to\Dbb(J)$,
assuming a mild hypothesis on $\W$, namely that $\W$ is
\emph{accessible}, \emph{i.e.}~that it is
generated by a \emph{set} of arrows of $\cat$ (it is the smallest
basic localizer containing a \emph{set} of arrows
of $\cat$). 
\medbreak

Grothendieck obtains the following characterization of $\W$\nobreakdash-smooth functors~\cite{Der},~\cite{Ma}.

\begin{thm'}
Let $u:A\to B$ be a morphism of $\cat$. The following conditions
are equivalent:
\begin{itemize}
\item[(a)] $u$ is $\W$\nobreakdash-smooth, \emph{i.e.}~for every diagram of cartesian squares
$$
\xymatrix{
&A''\ar[r]^{v}\ar[d]_{u''}  
&A'\ar[r]\ar[d]^{u'}
&A\ar[d]^{u}
\\
&B''\ar[r]_{w}
&B'\ar[r]
&B
&\hskip -20pt,
}$$
the base change morphism $u''_!v^*\to w^*u'_!$ is an isomorphism;
\item[(b)] for every object $b$ of $B$, the inclusion
$$A_b\toto b\backslash A\quad,\qquad a\longmapsto (a,1_b:b\to u(a)=b)\quad$$
(where $A_b$ denotes the fiber of $A$ over $b$ and $b\backslash A=A\times_B(b\backslash B)$
the ``comma category'' of objects under $b$) is a $\W$\nobreakdash-aspheric functor.
\item[(c)] for every diagram of cartesian squares
$$
\xymatrix{
&A''\ar[r]^{v}\ar[d]
&A'\ar[r]\ar[d]
&A\ar[d]^{u}
\\
&B''\ar[r]_{w}
&B'\ar[r]
&B
&\hskip -20pt,
}$$
if the functor $w$ is $\W$\nobreakdash-aspheric, then so is $v$. 
\end{itemize}
\end{thm'}

Grothendieck is filled with wonder by the following fact that he deduces from this theorem:

\begin{corol}
A morphism $u:A\to B$ of $\cat$ is $\W$\nobreakdash-smooth if and only if the
opposite functor $\op{u}:\op{A}\to\op{B}$ is $\W$\nobreakdash-proper.
\end{corol}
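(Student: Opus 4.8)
The plan is to deduce the corollary from part~(b) of the Theorem together with its exact counterpart for proper functors, the bridge being the behaviour of fibers and comma categories under the involution $X\mapsto\op X$ of $\cat$. First I would record the elementary identities, valid for $u:A\to B$ and an object $b$ of $B$: the involution preserves cartesian squares, fixes the fibers in the sense that $\op{(A_b)}=(\op A)_b$, and exchanges the two kinds of comma category, namely $\op{(b\backslash A)}=\op A/b$ and $\op{(A/b)}=b\backslash\op A$, the commas on the right being formed with respect to $\op u$. In particular, applying $\op{(-)}$ to the smooth inclusion $A_b\toto b\backslash A$ of the Theorem yields precisely the inclusion $(\op A)_b\toto\op A/b$ of the fiber into the comma category of objects over $b$, which is the inclusion governing the $\W$\nobreakdash-properness of $\op u$. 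Thus the smooth inclusion of $u$ and the proper inclusion of $\op u$ are mutually opposite functors.

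Next I would invoke the proper analogue of the Theorem: $\op u$ is $\W$\nobreakdash-proper if and only if, for every object $b$, the inclusion $(\op A)_b\toto\op A/b$ is \emph{$\W$\nobreakdash-coaspheric}, meaning that its opposite functor is $\W$\nobreakdash-aspheric. Granting this, the statement is immediate: by the identification above, this opposite functor is exactly the smooth inclusion $A_b\toto b\backslash A$ of $u$, so the assertion ``$(\op A)_b\toto\op A/b$ is $\W$\nobreakdash-coaspheric'' says literally that ``$A_b\toto b\backslash A$ is $\W$\nobreakdash-aspheric''. Quantifying over the objects $b$ of $B$, the first condition characterizes the $\W$\nobreakdash-properness of $\op u$ by the proper analogue, while the second characterizes the $\W$\nobreakdash-smoothness of $u$ by condition~(b) of the Theorem; hence the two properties are equivalent. (One could run the same argument starting from condition~(c) instead, matching $\W$\nobreakdash-aspheric and $\W$\nobreakdash-coaspheric functors across the two nested cartesian squares.)

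The main obstacle is therefore the proper analogue of the Theorem itself, i.e.\ the characterization of $\W$\nobreakdash-proper functors by the $\W$\nobreakdash-coasphericity of the inclusions of the fibers into the comma categories of objects over each point. This cannot be obtained by merely transcribing the proof of the Theorem with $u_*$ in place of $u_!$, since the axiom \textsf{Loc3} is a relative Theorem~A available only for comma categories of objects \emph{over} an object and possesses no built-in dual for objects under an object. The conceptual reason the two notions must nevertheless correspond is the transpose relationship recorded in the Definition: in any cartesian square the proper base-change morphism $w^*u_*\toto u'_*v^*$ and the smooth base-change morphism $v_!u'^*\toto u^*w_!$ are mates under the adjunctions $u_!\dashv u^*\dashv u_*$, so that one is invertible exactly when the other is. I expect the delicate part to be the faithful transport of the stability-under-base-change clause through this mate correspondence, reducing the defining invertibility condition for the $\W$\nobreakdash-properness of $\op u$ to the asphericity criterion supplied by the Theorem for $u$; once that reduction is in place, the combinatorial identifications of the first paragraph are entirely routine.
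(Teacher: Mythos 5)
Your first two paragraphs follow essentially the route the paper intends: combine condition~(b) of the Theorem with the dual characterization of $\W$\nobreakdash-proper functors, using the identities $\op{(A_b)}=(\op{A})_b$ and $\op{(b\backslash A)}=\op{A}/b$ to see that the ``smooth inclusion'' of $u$ and the ``proper inclusion'' of $\op{u}$ are mutually opposite; those identifications are correct. The genuine gap is in your third paragraph. The transpose relationship recorded in the Definition relates, for one and the same cartesian square, the invertibility of $w^*u_*\to u'_*v^*$ (which enters the properness of the \emph{vertical} arrow $u$) to the invertibility of $v_!u'^*\to u^*w_!$ (which enters the smoothness of the \emph{horizontal} arrow $w$). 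It never involves opposite categories, and it cannot be bent into a comparison of $u$ with $\op{u}$: the base change data for $\op{u}$ live in $\Dbb(\op{A})$, $\Dbb(\op{B'})$, i.e.\ in localized categories of \emph{covariant} functors on $A$, $B'$, which are not the categories in which the base change data for $u$ live. So the mate correspondence is the wrong tool for the reduction you propose, and the ``delicate transport of the stability clause'' you anticipate cannot be carried out along these lines.

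The ingredient you are missing --- and which the paper cites explicitly as the third input to the corollary --- is the theorem that a functor between small categories belongs to $\W$ if and only if its opposite does. This is precisely what repairs the asymmetry you correctly diagnose in \textsf{Loc3}: it implies that every basic localizer also satisfies the dual of \textsf{Loc3} (a relative Theorem~A for coslice categories), so that the entire development leading to the characterization of $\W$\nobreakdash-smooth functors dualizes verbatim and yields the characterization of $\W$\nobreakdash-proper functors by the coasphericity of the inclusions $A_b\to A/b$. It is also what makes the usual definition of a coaspheric functor (all coslices $d\backslash C$ are $\W$\nobreakdash-aspheric) agree with the one you adopt ($\op{w}$ is $\W$\nobreakdash-aspheric), without which your second paragraph is only a tautology about an undefined notion. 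With that theorem in hand (it is nontrivial, and is proved in the cited references rather than rederived here), your first two paragraphs do complete the argument; without it, the ``proper analogue of the Theorem'' is not available, and the route via mates does not supply it.
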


This result is a straightforward consequence of the characterization above,
of the ``dual'' characterization of $\W$\nobreakdash-proper morphisms,
and of the fact that a functor between small categories belongs to $\W$
if and only if the opposite functor does~\cite{PS},~\cite{Der},~\cite{Ma}.
\medbreak

Fibrations (morphisms $u:A\to B$ of $\cat$ such that $A$ is a fibered category
over~$B$) are important examples of $\W$\nobreakdash-smooth functors. 
This follows from the fact that if $u:A\to B$ is a fibration,
then for every object $b$ of $B$ the inclusion
$$A_b\toto b\backslash A\quad,\qquad a\longmapsto (a,1_b:b\to u(a)=b)\quad$$
has a right adjoint, and from the fact that a functor between
small categories which has a right adjoint is a $\W$\nobreakdash-aspheric functor.
\medbreak

The impetus for this work was the observation that the class of
fibrations shares many formal properties with the class
of $\W$\nobreakdash-smooth functors and that the class
of functors which have a right adjoint shares many properties with the class of $\W$\nobreakdash-aspheric functors. 
Nevertheless, there is no basic localizer $\W$ such that
the $\W$\nobreakdash-smooth morphisms are exactly the fibrations, or such that
the $\W$\nobreakdash-aspheric morphisms are exactly the functors
which have a right adjoint. Moreover, we notice that
the notions of $\W$\nobreakdash-aspheric, of $\W$\nobreakdash-smooth and of $\W$\nobreakdash-proper morphisms
depend on the sole class of $\W$\nobreakdash-aspheric categories.
If two basic localizers have the same class of aspheric objects,
then the corresponding notions of aspheric functors,
smooth functors and proper functors are the same. 
So I tried to look for the minimal properties that
a class of \emph{objects} of $\cat$ should fulfil 
in order to give rise to
a theory of smooth functors. 
This has led me, at the cost of
breaking the symmetry of passing to the opposite category, to
introduce the notion of right asphericity structure. Since this
notion is not self-dual, one needs also to consider the
dual notion of left asphericity structure, giving rise to a theory of proper functors. 
\medbreak

In the first section, we define the notion of right asphericity structure, 
implying
a notion of aspheric functor. We notice
that there exists a minimal right asphericity structure, defined by
the class of small categories which have a final object, 
the corresponding aspheric functors 
being exactly the functors between small categories
which have a right adjoint. 
For every basic localizer, we define a right asphericity structure with the same aspheric functors as the given localizer.
\medbreak

The second and the third sections are logically independent.
In the second one, we consider localizations of categories of functors
with domain a small category and codomain $\cat$, with respect to
natural transformations that are componentwise aspheric, and we prove the existence
of left homotopical Kan extensions.
\medbreak

In the third one, we introduce the notion of smooth functor associated with a
right asphericity structure. In this framework, it splits into two notions:
smooth functors and weakly smooth functors. We give several equivalent characterizations
for each of these two notions and we study their main properties.
We show that the smooth functors with respect to the minimal right asphericity structure
are exactly the fibrations, and that the weakly smooth functors with respect to this structure are exactly the prefibrations.
This result provides a new characterization of fibered categories.
Finally, we observe that,  when the right asphericity structure is defined by a basic
localizer, the notions of smooth functor and weakly smooth functor are equivalent and equivalent to 
the notion of smooth functor with respect to the basic localizer. 
\medbreak

In the last section, we combine the results of the two previous ones. We prove a characterization
of smooth functors (with respect to a right asphericity structure) in terms
of 
base change morphisms. As in this paper we consider
\emph{covariant} functors with values in $\cat$, rather than presheaves, we obtain a characterization which is dual to Grothendieck's formulation.
\medbreak

Recently, Jonathan Chiche has generalized the notion of right asphericity structure, and some of the results of this paper, to the framework of $2$\nobreakdash-categories~\cite{Chiche}.

\section{Right asphericity structures}

\begin{paragr}
A \emph{right asphericity structure} is a class
$\Asph$ of small categories satisfying the following two conditions.
\begin{itemize}
\item[As1] Every small category which has a final object is in $\Asph$.
\item[As2] For every functor between small categories $u:A\to B$, if $B$ is in $\Asph$, 
and if for every object $b$ of $B$, $A/b$ is in $\Asph$, then $A$ is also in $\Asph$.
\end{itemize}
\end{paragr}

\begin{exemple}\label{3strasphmin}
The class $\Asph$ of small categories which have a final object 
is a right asphericity structure. Indeed, condition As1
being fulfilled by definition, it is enough to check 
condition As2. Let $B$ be a category with a final object $b_0$,
 and $u:A\to B$ a morphism in $\cat$ such
that for every object $b$ of $B$, $A/b$ has a final object.
Then the category $A$, which is isomorphic to $A/b_0$, has a final
object, hence the assertion.
This right asphericity structure is the
\emph{minimal right asphericity structure}.
\end{exemple}

\begin{exemple}\label{3strasphlocfond}
Let $\W$ be a basic localizer.
The class of $\W$\nobreakdash-aspheric categories is a right asphericity structure.
\end{exemple}
\goodbreak

\noindent
\emph{In the sequel, we fix, once and for all, a right asphericity structure~$\Asph$.}

\begin{paragr}\label{3defcatasph}
A small category $C$ is said to be $\Asph$\nobreakdash-\emph{aspheric}, or
more simply \emph{aspheric}, if it belongs to $\Asph$.
Using this definition, condition As1 states that a small category which has
a final object is aspheric.
\end{paragr}

\begin{prop}\label{3prodcatasph}
The product of two small aspheric categories is
an aspheric category.
\end{prop}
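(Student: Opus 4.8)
The plan is to apply condition As2 to a projection functor and to ground the argument in As1 via comma categories that genuinely possess a final object. The essential observation is a pair of comma-category identities. For the first projection $\mathrm{pr}_1\colon A\times C\to A$ and any object $a$ of $A$, there is a canonical isomorphism $(A\times C)/a\cong(A/a)\times C$; dually, for the second projection $\mathrm{pr}_2\colon A\times B\to B$ and any object $b$ of $B$, an isomorphism $(A\times B)/b\cong A\times(B/b)$. These are routine to verify by unwinding the definition of the comma categories: the compatibility condition imposed on a morphism in $(A\times B)/b$ constrains only its second component, leaving the first component free, which is exactly what the factor $A$ records.

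First I would establish the auxiliary statement that \emph{if $A$ is aspheric and $C$ is a small category possessing a final object, then $A\times C$ is aspheric.} To see this, consider $\mathrm{pr}_1\colon A\times C\to A$. Its codomain $A$ is aspheric by hypothesis, so by As2 it suffices to check that $(A\times C)/a$ is aspheric for every object $a$ of $A$. By the identity above, $(A\times C)/a\cong(A/a)\times C$. Now $A/a$ has the final object $(a,1_a)$ and $C$ has a final object by assumption, so their product has a final object, whence As1 shows it is aspheric. Applying As2 then yields that $A\times C$ is aspheric.

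The main obstacle, and the reason the statement is not completely immediate, is an apparent circularity: the comma categories arising as the fibers of a projection are themselves products, so one cannot naively reduce the product of two aspheric categories to smaller products. This is resolved precisely by the auxiliary statement, in which one of the two factors is replaced by a category having a final object. At the next level the relevant comma categories $(A/a)\times C$ then genuinely possess a final object, so As1 applies directly and the recursion terminates rather than regressing.

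Finally I would deduce the proposition in the general case. Given aspheric categories $A$ and $B$, apply As2 to $\mathrm{pr}_2\colon A\times B\to B$. The codomain $B$ is aspheric, and for each object $b$ of $B$ the second identity gives $(A\times B)/b\cong A\times(B/b)$. Since $B/b$ has the final object $(b,1_b)$, the auxiliary statement applies with $C=B/b$ and shows that $A\times(B/b)$ is aspheric. Thus every fiber of $\mathrm{pr}_2$ over an object of $B$ is aspheric, and As2 concludes that $A\times B$ is aspheric.
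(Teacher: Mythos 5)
Your proof is correct and follows essentially the same route as the paper: two nested applications of As2 to the projection functors, reducing to a product of comma categories $(A/a)\times(B/b)$ which has a final object and is therefore aspheric by As1. The only difference is organizational — you extract one level of the nesting as an explicit auxiliary lemma — which changes nothing of substance.
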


\begin{proof}
Let $A$ and $B$ be two small aspheric categories. Let us
show that their product $A\times B$ is aspheric too. 
Considering the first projection $A\times B\to A$, it suffices by~As2 
to show that for every object $a$ of $A$, the category
$(A\times B)/a\simeq A/a\times B$ is aspheric. Considering
the second projection $A/a\times B\to B$, it suffices by
As2 to show that for every object $b$ of $B$, the category 
$(A/a\times B)/b\simeq A/a\times B/b$ is aspheric. Now, the latter
has a final object, and the assertion follows from As1.
\end{proof}

\begin{paragr}\label{3deffonctasph}
A morphism $u:A\to B$ is said to be $\Asph$\nobreakdash-\emph{aspheric}, or
more simply \emph{aspheric}, if for every object $b$ of $B$, the category $A/b$ is aspheric. 
In terms of this definition, condition~As2 states that
if the codomain of an aspheric morphism is an aspheric category,
then its domain is aspheric, too. Notice that a small
category $A$ is aspheric if and only if the
functor $A\to e$ from $A$ to the final category is an
aspheric morphism. For every small category $A$, the identity functor
$1_A:A\to A$ is aspheric. Indeed, for every object $a$ of $A$, the category
$A/a$ has a final object, and the assertion follows from condition~As1.
\end{paragr}

\begin{exemple}\label{3carfonctasphmin}
If $\Asph$ is the minimal right asphericity structure (\emph{cf.}~\ref{3strasphmin}),
then the aspheric morphisms are exactly the functors
between small categories which have a right adjoint.
Indeed, a functor between small categories $u:A\to B$ has
a right adjoint if and only if, for every object $b$ of
$B$, the category $A/b$ has a final object.
\end{exemple}

\begin{exemple}
If $\Asph$ is the right asphericity structure associated with a basic localizer $\W$
(\emph{cf.}~\ref{3strasphlocfond}), then the $\Asph$\nobreakdash-aspheric morphisms are exactly the
$\W$\nobreakdash-aspheric functors.
\end{exemple}

\begin{cor}\label{3prodfonctasph}
Let $u:A\to B$ and $u':A'\to B'$ be two aspheric morphisms
of $\cat$. Then the functor $u\times u':A\times A'\to B\times B'$
is aspheric.
\end{cor}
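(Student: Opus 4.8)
The plan is to verify $u\times u'$ directly against the definition of an aspheric morphism: I must show that for every object $(b,b')$ of the codomain $B\times B'$, the comma category $(A\times A')/(b,b')$ is aspheric. So I would fix such an object $(b,b')$ and analyze this comma category explicitly.

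The key step is to identify this comma category with a product of the comma categories coming from $u$ and $u'$. An object of $(A\times A')/(b,b')$ consists of an object $(a,a')$ of $A\times A'$ together with a morphism $(b,b')\to(u(a),u'(a'))$ in $B\times B'$; since a morphism in a product category is precisely a pair of morphisms, such a datum amounts to a pair $((a,\beta),(a',\beta'))$ with $\beta:b\to u(a)$ in $B$ and $\beta':b'\to u'(a')$ in $B'$. Checking that this correspondence respects composition and identities yields a canonical isomorphism
$$(A\times A')/(b,b')\;\simeq\;(A/b)\times(A'/b')\quad.$$

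With this identification the conclusion is immediate. Since $u$ is aspheric, $A/b$ is aspheric; since $u'$ is aspheric, $A'/b'$ is aspheric; and by Proposition~\ref{3prodcatasph} the product of two aspheric categories is aspheric. Hence $(A\times A')/(b,b')$ is aspheric for every object $(b,b')$ of $B\times B'$, which is exactly the statement that $u\times u'$ is aspheric.

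The argument is entirely routine, and the only point demanding any care — the nearest thing to an obstacle — is the verification of the canonical isomorphism of comma categories, a standard fact about comma categories relative to a product of functors. Once this is granted, the corollary is simply the combination of the definition of aspheric morphism (\emph{cf.}~\ref{3deffonctasph}) with Proposition~\ref{3prodcatasph}.
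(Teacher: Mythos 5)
Your argument is correct and is exactly the paper's proof: identify $(A\times A')/(b,b')$ with $(A/b)\times(A'/b')$ and apply Proposition~\ref{3prodcatasph}. Nothing to add.
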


\begin{proof}
We have to show that
for every object $(b,b')$ of $B\times B'$, the category\penalty -10000{}
$(A\times A')/(b,b')$ is aspheric. Now, $(A\times A')/(b,b')$
is canonically isomorphic to $(A/b)\times(A'/b')$ and by hypothesis
$A/b$ and $A'/b'$ are aspheric. Therefore the assertion follows from
proposition~\ref{3prodcatasph}.
\end{proof}

\begin{prop}\label{3asphlocbase}
Let
$$\xymatrixcolsep{.8pc}
\xymatrix{
A\ar[rr]^{u}\ar[dr]
&&B\ar[dl]^{v}
\\
&C
}$$
be a commutative triangle in $\cat$. The morphism $u$ is
aspheric if and only if, for every object $c$ of $C$,
the morphism $u/c:A/c\to B/c$, induced by $u$, is aspheric.
\end{prop}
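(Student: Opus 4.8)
The plan is to reduce the whole equivalence to a single canonical isomorphism of comma categories. Write $w=vu:A\to C$ for the diagonal of the triangle, so that $A/c=A\times_C(C/c)$ has as objects the pairs $(a,\xi)$ with $a\in\ob(A)$ and $\xi:w(a)\to c$, while $B/c$ has as objects the pairs $(b,\eta)$ with $\eta:v(b)\to c$; the functor $u/c$ then sends $(a,\xi)$ to $(u(a),\xi)$, which is legitimate because $v(u(a))=w(a)$.

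The heart of the matter is to produce, for every object $c$ of $C$ and every object $(b,\eta)$ of $B/c$, a canonical isomorphism of categories
$$(A/c)/(b,\eta)\;\cong\;A/b\quad,$$
where $A/b$ is the comma category attached to $u:A\to B$. To obtain it, I would unwind the left-hand side: an object is a pair $((a,\xi),g)$ with $(a,\xi)$ an object of $A/c$ and $g:(u(a),\xi)\to(b,\eta)$ a morphism of $B/c$, the latter being a morphism $g:u(a)\to b$ of $B$ subject to $\eta\circ v(g)=\xi$. Thus $\xi$ is forced to equal $\eta\circ v(g)$ and carries no information, so $((a,\xi),g)\mapsto(a,g)$ is a bijection onto the objects $(a,g:u(a)\to b)$ of $A/b$, with inverse $(a,g)\mapsto((a,\eta\circ v(g)),g)$. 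The same collapse occurs on morphisms: a morphism of $A/c$ lying over $(b,\eta)$ is a morphism $h:a\to a'$ of $A$ with $g'\circ u(h)=g$, the remaining condition $\xi'\circ w(h)=\xi$ being automatic since it is obtained by applying $\eta\circ v(-)$ to the former. Hence the assignment is an isomorphism of categories.

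Granting this isomorphism, both implications follow at once. If $u$ is aspheric, then $A/b$ is aspheric for every object $b$ of $B$; consequently, for every $c$ and every $(b,\eta)$ in $B/c$ the category $(A/c)/(b,\eta)\cong A/b$ is aspheric, which is precisely the asphericity of $u/c$. Conversely, assume $u/c$ is aspheric for every object $c$ of $C$. Given an arbitrary object $b$ of $B$, I apply this to $c=v(b)$ and to the object $(b,1_{v(b)})$ of $B/c$: asphericity of $u/c$ yields that $(A/c)/(b,1_{v(b)})\cong A/b$ is aspheric. As $b$ was arbitrary, $u$ is aspheric.

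The one point requiring care is the comma-category bookkeeping of the second paragraph, namely checking that the second coordinate $\xi$ is genuinely redundant both on objects and on morphisms, so that no asphericity assumption on the base $C$ intervenes. Once the canonical isomorphism is established, everything else is purely formal, using only that the class $\Asph$ is stable under isomorphism of categories.
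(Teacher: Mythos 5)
Your proof is correct and follows exactly the paper's argument: establish the canonical isomorphism $(A/c)/(b,\eta)\cong A/b$, deduce the forward implication immediately, and for the converse specialize to $c=v(b)$ and the object $(b,1_{v(b)})$ of $B/v(b)$. The only difference is that you spell out the verification of the isomorphism that the paper dismisses with ``we check immediately.''
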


\begin{proof}
We check immediately that for every object $c$ of $C$ and for every object
$(b,p:v(b)\to c)$ of $B/c$, the category $(A/c)/(b,p)$ is
canonically isomorphic to $A/b$. We deduce that if $u$ is
aspheric, then so is $u/c$. Conversely, assume that for
every object $c$ of $C$ the morphism $u/c$ is aspheric. Then,
for every object $b$ of $B$, the category
$(A/v(b))/(b,1_{v(b)})\simeq A/b$ is aspheric, which proves
that $u$ is aspheric.
\end{proof}

\begin{prop}\label{3compfonctasph}
Let $\raise 3.5pt\vbox{\xymatrixcolsep{1pc}\xymatrix{A\ar[r]^u&B\ar[r]^v&C}}$
be a pair of composable morphisms of $\cat$. If $u$ and $v$ are aspheric,
then so is $vu$.
\end{prop}

\begin{proof}
For every object $c$ of $C$, since $v$ is aspheric, the
category $B/c$ is aspheric, and by the previous proposition,
since $u$ is aspheric, so is the functor
$u/c:A/c\to B/c$ induced by $u$. From this we deduce that the category
$A/c$ is aspheric (As2), which proves that $vu$ is aspheric. 
\end{proof}

\begin{prop}\label{3adjgasph}
Let $u:A\to B$ be a morphism of $\cat$. If $u$ has a right adjoint, 
then $u$ is aspheric.
\end{prop}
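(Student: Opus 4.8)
The plan is to reduce the statement to condition~As1 by showing that, for every object $b$ of $B$, the comma category $A/b$ admits a final object; by the definition of an aspheric morphism (\ref{3deffonctasph}), this suffices. First I would recall the explicit description of $A/b$: its objects are the pairs $(a,p)$ consisting of an object $a$ of $A$ together with a morphism $p:u(a)\to b$ of $B$, while a morphism $(a,p)\to(a',p')$ is a morphism $f:a\to a'$ of $A$ satisfying $p'\,u(f)=p$.

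Next I would bring in the right adjoint. Let $v:B\to A$ be a right adjoint of $u$ and let $\e:uv\to 1_B$ denote the counit of the adjunction. I would claim that, for each object $b$ of $B$, the pair $(v(b),\e_b)$, with $\e_b:u(v(b))\to b$, is a final object of $A/b$. Indeed, given any object $(a,p)$ of $A/b$, a morphism $(a,p)\to(v(b),\e_b)$ in $A/b$ is precisely a morphism $f:a\to v(b)$ of $A$ such that $\e_b\,u(f)=p$; since the adjunction bijection $\Hom_A(a,v(b))\simeq\Hom_B(u(a),b)$ is given by $f\mapsto\e_b\,u(f)$, there is exactly one such $f$, namely the morphism corresponding to $p$ under this bijection. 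This is exactly the observation already recorded in Example~\ref{3carfonctasphmin}.

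Having produced a final object of $A/b$ for every object $b$ of $B$, I would conclude by invoking condition~As1, which guarantees that each such $A/b$ belongs to $\Asph$, that is, is aspheric; by definition this is precisely the assertion that $u$ is aspheric. The argument presents no genuine obstacle: the only point requiring care is the correct identification of the final object $(v(b),\e_b)$ and the verification, through the counit, that the required factorisation of $p$ exists and is unique.
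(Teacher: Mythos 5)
Your proof is correct and follows essentially the same route as the paper: the paper uses the adjunction bijection to identify $A/b$ with the slice $A/v(b)$, whose final object $(v(b),1_{v(b)})$ corresponds precisely to your $(v(b),\e_b)$, and then invokes As1. The two arguments differ only in whether the final object is exhibited directly via the counit or transported through that isomorphism.
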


\begin{proof}
Let $v:B\to A$ be a right adjoint to $u$. The functorial bijection
$$\Hom_B(u(a),b))\simeq\Hom_A(a,v(b)),\quad a\in\ob(A),\ b\in\ob(B),\quad$$
implies that, for every object $b$ of $B$, the category $A/b$ is isomorphic
to the category $A/v(b)$, which has a final object. It follows that $A/b$ is
aspheric, hence the proposition.
\end{proof}

\begin{cor}\label{3eqcatasph}
An equivalence between small categories is an aspheric functor.
\end{cor}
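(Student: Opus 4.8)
The plan is to reduce the statement to Proposition~\ref{3adjgasph}. The key observation is that an equivalence of small categories admits a right adjoint; granting this, the corollary is immediate.

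First I would recall the classical fact that every equivalence can be promoted to an adjoint equivalence. If $u:A\to B$ is an equivalence, choose a quasi-inverse $v:B\to A$ together with natural isomorphisms $1_A\simeq vu$ and $uv\simeq 1_B$; adjusting one of these isomorphisms so that the triangle identities are satisfied exhibits an adjunction $u\dashv v$, in which $u$ is the left adjoint and $v$ a right adjoint. Proposition~\ref{3adjgasph} then applies verbatim and yields that $u$ is aspheric.

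If one prefers to avoid the adjoint-equivalence upgrade, the asphericity of $u$ can be verified directly, essentially repeating the argument of Proposition~\ref{3adjgasph}. Fix a quasi-inverse $v$ and a counit isomorphism $\varepsilon:uv\simeq 1_B$. For each object $b$ of $B$, the pair $(v(b),\,\varepsilon_b:u(v(b))\to b)$ is a final object of the comma category $A/b$: since $u$ is fully faithful and $\varepsilon_b$ is invertible, the assignment sending $g$ to $\varepsilon_b\circ u(g)$ is a bijection from $\Hom_A(a,v(b))$ onto $\Hom_B(u(a),b)$, so each object $(a,f:u(a)\to b)$ of $A/b$ admits exactly one morphism to $(v(b),\varepsilon_b)$. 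Hence every $A/b$ has a final object and is aspheric by~As1, which is precisely the statement that $u$ is aspheric. As the only ingredient beyond Proposition~\ref{3adjgasph} is the elementary existence of a right adjoint to an equivalence, there is no genuine obstacle here.
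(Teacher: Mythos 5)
Your proof is correct and follows the route the paper intends: the corollary is stated immediately after Proposition~\ref{3adjgasph} precisely because an equivalence can be upgraded to an adjoint equivalence, making it a left adjoint and hence aspheric. The direct verification that each $A/b$ has a final object is a fine alternative but adds nothing beyond the adjunction argument.
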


\begin{paragr}\label{3catcof}
Let $u:A\to B$ be a functor. We recall that the {\it fiber\/} of $u$ over
an object $b$ of $B$ is the subcategory (not full in general) $A_b$ of $A$ whose objects 
are the objects $a$ of $A$ such that $u(a)=b$, and whose morphisms 
are the arrows $f$ of $A$ such that \hbox{$u(f)=1_b$}. An arrow $c:a\to a'$
of $A$ is called {\it cocartesian\/}
(with respect to $u$, or over $B$) if for every morphism
$f:a\to a''$ of $A$ such that \hbox{$u(f)=u(c)$}, there is a unique morphism $g:a'\to a''$
of $A$ \smash{such that $u(g)=1_{u(a')}$ and $f=gc$.}
$$\xymatrixrowsep{1.8pc}
\xymatrixcolsep{4.pc}
\xymatrix{
&a''
\\
a\ar[ur]^f\ar[r]_c
&a'\ar@{-->}[u]_g
\\
u(a)\ar[r]_{u(c)}
&u(a')
}$$
The arrow $c$ is called \emph{hypercocartesian} (with respect to $u$, or over $B$)
if for every morphism $f:a\to a''$ of $A$ and every morphism $h:u(a')\to u(a'')$ 
of $B$ such that $u(f)=h\hskip 1pt u(c)$, there is a unique morphism $g:a'\to a''$
of $A$ such that $u(g)=h$ and $f=gc$.
$$\xymatrixrowsep{1.8pc}
\xymatrixcolsep{2.5pc}
\xymatrix{
&&a''
\\
a\ar[urr]^f\ar[r]_c
&a'\ar@{-->}[ur]_g
\\
u(a)\ar[r]_{u(c)}
&u(a')\ar[r]_h
&u(a'')
}$$
The functor $u$ is called a {\it precofibration\/} if for every morphism
$p:b\to b'$ of $B$, and for every object $a$ of $A$ over $b$
(\emph{i.e.}~$u(a)=b$),
there is a cocartesian morphism $c:a\to a'$ over $p$ 
(\emph{i.e.}~$u(c)=p$).
The functor $u$ is called a {\it cofibration\/} if $u$ is a precofibration,
and if the class of cocartesian morphisms of $A$ is stable under composition. 
It is easily checked that $u$ is a cofibration
if and only if for every morphism
$p:b\to b'$ of~$B$ and for every object $a$ of $A$ over $b$,
there is a hypercocartesian morphism $c:a\to a'$ over $p$.
\smallbreak

Dually, a morphism of $A$ is called {\it cartesian\/} (resp.~\emph{hypercartesian})
with respect to $u$, or over $B$, if the corresponding morphism of $A^\circ$
(the opposite category of $A$) is cocartesian (resp.~hypercocartesian) with respect to
$u^\circ:A^\circ\to B^\circ$. The functor $u$ is called a {\it prefibration\/}
(resp.~a {\it fibration\/}) if the functor $u^\circ$ is a precofibration
(resp.~a cofibration).
\end{paragr}

\begin{lemme}\label{3carprecof}
{\it A functor $u:A\to B$ is a precofibration if and only if
for every object $b$ of $B$, the canonical functor $A_b\to A/b$, which sends
an object $a$ of the fiber $A_b$ of $u$ over $b$ to the object
$(a,1_b)$ of $A/b$, has a left adjoint.}
\end{lemme}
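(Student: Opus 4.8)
The plan is to prove the equivalence by directly relating the universal property of a cocartesian arrow to the universal property of a left adjoint to the canonical inclusion $A_b \hookrightarrow A/b$. The key observation is that an object of $A/b$ is a pair $(a, p\colon u(a)\to b)$, and a left adjoint to $A_b\to A/b$ would assign to each such pair an object of the fiber $A_b$ together with a universal morphism. I expect the cocartesian arrows to provide exactly this universal data, so the whole proof reduces to unwinding the two universal properties and checking they coincide.

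First I would fix an object $b$ of $B$ and spell out the inclusion $\iota\colon A_b\to A/b$ sending $a$ to $(a,1_b)$. A left adjoint $L\colon A/b\to A_b$ exists if and only if, for every object $(a,p\colon u(a)\to b)$ of $A/b$, the comma category $\iota\backslash (a,p)$ (equivalently, the functor $\iota$ viewed from $(a,p)$) admits an initial object; by the standard pointwise criterion for adjoints, this amounts to finding an object $a'$ of $A_b$ and a morphism $(a,p)\to \iota(a')=(a',1_b)$ in $A/b$ that is universal among all morphisms from $(a,p)$ into the image of $\iota$. A morphism $(a,p)\to(a',1_b)$ in $A/b$ is precisely an arrow $c\colon a\to a'$ of $A$ with $u(a')=b$ and satisfying the triangle condition $1_b\circ u(c)=p$, i.e.\ $u(c)=p$.

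Next I would match this against the definition of a cocartesian arrow over $p$. Given $a$ over $b$ with $u(a)=b$—wait, more carefully, given the object $(a,p\colon u(a)\to b)$, set $b'=u(a)$ so that $p\colon b'\to b$ is a morphism of $B$; a precofibration supplies a cocartesian arrow $c\colon a\to a'$ over $p$, so $u(c)=p$ and in particular $u(a')=b$, placing $a'$ in the fiber $A_b$. I would then verify that the universal property of $c$ being cocartesian is exactly the statement that $c\colon (a,p)\to(a',1_b)$ is the universal morphism from $(a,p)$ into $\iota(A_b)$: indeed, for any object $a''$ of $A_b$ and any morphism $(a,p)\to(a'',1_b)$, i.e.\ any $f\colon a\to a''$ with $u(f)=p=u(c)$, the cocartesian property furnishes a unique $g\colon a'\to a''$ with $u(g)=1_b$ and $f=gc$, and $u(g)=1_b$ says precisely that $g$ is a morphism of the fiber $A_b$. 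This bijective correspondence between factorizations is the pointwise adjunction condition.

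The two directions then follow by reading this correspondence forwards and backwards. If $u$ is a precofibration, the chosen cocartesian arrows assemble (by the uniqueness in their universal property) into a left adjoint $L$ with $L(a,p)=a'$ and unit $c$. Conversely, if $A_b\to A/b$ has a left adjoint for every $b$, the component of the unit at $(a,p)$ is a morphism $c$ over $p$ whose universal property among maps into the fiber is exactly cocartesianness, so $u$ is a precofibration. \textbf{The main obstacle} will be the bookkeeping needed to confirm that the factorization condition $u(g)=1_b$ in the definition of cocartesian arrow corresponds precisely to ``$g$ lies in the fiber $A_b$,'' and that morphisms into $\iota(A_b)$ in the comma category $A/b$ are exactly arrows of $A$ landing in the fiber—this is the point where the asymmetry between $A/b$ and $A_b$ must be handled with care, though it is ultimately a routine unwinding of definitions rather than a substantive difficulty.
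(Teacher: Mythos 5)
Your proof is correct, and since the paper explicitly leaves this verification to the reader, your unwinding of the two universal properties is exactly the intended argument: a morphism $(a,p)\to(a',1_b)$ in $A/b$ is an arrow $c$ of $A$ with $u(c)=p$, and the pointwise criterion for the existence of a left adjoint to $A_b\to A/b$ matches the cocartesian lifting condition verbatim (note that $u(f)=u(c)$ already forces the target $a''$ to lie in the fiber $A_b$, so the two quantifications coincide). Nothing is missing.
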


The proof is left to the reader. 

\begin{prop}\label{3precoffibasph}
{\it Let $u:A\to B$ be a morphism of $\cat$, and assume that $u$
is a precofibration and that for every object $b$ of $B$,
the fiber $A_b$ of $u$ over $b$ is aspheric. Then $u$
is aspheric.}
\end{prop}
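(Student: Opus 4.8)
The plan is to unwind the definition of an aspheric morphism and reduce everything to a single fiberwise statement. By \ref{3deffonctasph}, showing that $u$ is aspheric amounts to showing that for every object $b$ of $B$ the comma category $A/b$ lies in $\Asph$. So I would fix an object $b$ of $B$ and aim to prove that $A/b$ is aspheric, knowing by hypothesis that the fiber $A_b$ is aspheric. The bridge between the two categories is supplied by the hypothesis that $u$ is a precofibration: by Lemma~\ref{3carprecof}, this is exactly the statement that the canonical functor $i_b\colon A_b\to A/b$, $a\longmapsto(a,1_b)$, admits a left adjoint, which I will call $r_b\colon A/b\to A_b$.

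The idea is then to transport asphericity from the fiber to the comma category along this adjunction, using only the two formal facts already at hand. Since $r_b$ is left adjoint to $i_b$, the functor $r_b$ has a right adjoint (namely $i_b$), so Proposition~\ref{3adjgasph} gives that $r_b\colon A/b\to A_b$ is an aspheric \emph{morphism}. Its codomain $A_b$ is an aspheric \emph{category} by assumption; hence condition~As2 (in the reformulation recorded in~\ref{3deffonctasph}, that the domain of an aspheric morphism with aspheric codomain is again aspheric) yields that $A/b$ is aspheric. As this holds for every object $b$ of $B$, the morphism $u$ is aspheric, which is the desired conclusion.

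I do not expect any genuine obstacle here: the argument is a short formal chaining of the results of the section, and no combinatorial or homotopical work is involved. The only point demanding care is getting the variance right. Asphericity of a functor is controlled, via Proposition~\ref{3adjgasph}, by the existence of a \emph{right} adjoint, whereas Lemma~\ref{3carprecof} hands us a \emph{left} adjoint to $i_b$; the resolution is to apply Proposition~\ref{3adjgasph} not to $i_b$ but to its left adjoint $r_b$, for which $i_b$ serves as the required right adjoint. Once this is correctly identified, the two remaining ingredients (Proposition~\ref{3adjgasph} and condition~As2) slot together immediately.
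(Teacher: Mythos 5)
Your proof is correct and is essentially identical to the paper's: both apply Lemma~\ref{3carprecof} to get the left adjoint $j_b=r_b$ of $i_b\colon A_b\to A/b$, invoke Proposition~\ref{3adjgasph} on that left adjoint (which has $i_b$ as its right adjoint) to see it is aspheric, and conclude by As2. Nothing to add.
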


\begin{proof}
According to the previous lemma, for every object $b$ of $B$, the functor
$$\begin{aligned}
i_b:A_b&\toto A/b\quad\\
\noalign{\vskip 1pt}
a&\mapsto(a,1_b)
\end{aligned}$$
has a left adjoint
$$j^{}_b:A/b\toto A_b\quad.$$
It follows from proposition~\ref{3adjgasph} that the functor $j^{}_b$ is aspheric.
Since $A_b$ is aspheric, so is
 $A/b\,$ (As2), hence $u$ is aspheric.
\end{proof}

\begin{paragr}\label{3deffonctlocasph}
A morphism $u:A\to B$ of $\cat$ is said to be \emph{locally $\Asph$\nobreakdash-aspheric}, or
more simply \emph{locally aspheric}, if for every object $a$
of $A$, the morphism
$$A/a\toto B/b\quad,\qquad b=u(a)\quad,$$
induced by $u$, is aspheric.
\end{paragr}

\begin{exemples}\label{3exfonctlocasph}
\emph{a}) A fully faithful aspheric functor is locally aspheric.
\smallbreak

\emph{b}) For any small category $C$, the functor $C\to e$
is locally aspheric.
\end{exemples}

\begin{prop}\label{3prop1fonctlocasph}
\emph{a)} A morphism  $u:A\to B$  of $\cat$
is locally aspheric if and only if, for every
object $b$ of $B$, the functor \hbox{$u/b:A/b\to B/b$}, induced by $u$,
is aspheric.
\smallbreak

\emph{b)} Let $\xymatrixcolsep{1pc}\xymatrix{A\ar[r]^(.46)u&B\ar[r]^(.46)v&C}$
be a pair of composable morphisms of $\cat$. If $u$ and $v$ are locally aspheric,
then so is $vu$.
\smallbreak

\emph{c)} Let $u:A\to B$, $u':A'\to B'$ be two locally aspheric morphisms 
of $\cat$. Then the functor $u\times u':A\times A'\to B\times B'$
is locally aspheric.
\end{prop}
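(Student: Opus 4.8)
The plan is to prove the three parts in order, with part a) doing the conceptual work and parts b) and c) reducing to it via the earlier results on aspheric morphisms.

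For part a), I would compare the two conditions on slices. By Definition \ref{3deffonctlocasph}, $u$ is locally aspheric means that for every object $a$ of $A$, the induced functor $A/a \to B/u(a)$ is aspheric. The claim is that this is equivalent to: for every object $b$ of $B$, the functor $u/b : A/b \to B/b$ is aspheric. The key is to relate the slices of $A$ and the slices of $A/b$. Applying Proposition \ref{3asphlocbase} to the commutative triangle formed by $u/b : A/b \to B/b$ over the structural functor $B/b \to B$, asphericity of $u/b$ is controlled by the slices over objects of $B$. I would unwind the canonical identifications: for an object $b$ of $B$ and an object $(a, p : u(a) \to b)$ of $A/b$, the slice $(A/b)/(a,p)$ is canonically isomorphic to $A/a$, and the functor $(u/b)$ restricted to this slice matches the induced functor $A/a \to B/u(a)$ up to the natural identifications. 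The hard part is checking that asphericity of $u/b$ for all $b$ is detected precisely on these slices $(A/b)/(a,p) \simeq A/a$, and that this matches Definition \ref{3deffonctlocasph} exactly; this bookkeeping with comma categories is where the argument must be done carefully rather than invoked.

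For part b), I would use part a) to reduce to asphericity of composites of slice functors. Given $u : A \to B$ and $v : B \to C$ locally aspheric, I must show that for every object $c$ of $C$, the functor $(vu)/c : A/c \to C/c$ is aspheric. This functor factors as $A/c \to B/c \to C/c$, where the second factor is $v/c$ (aspheric by part a) applied to $v$) and the first factor is the functor induced by $u$ over $B$. Since $v/c$ is aspheric and $u$ is locally aspheric, Proposition \ref{3asphlocbase} should let me identify the first factor's slices with slices of $u$, exhibiting it as aspheric; then asphericity of the composite follows from Proposition \ref{3compfonctasph}. The main subtlety is correctly matching the factorization $A/c \to B/c$ with the local asphericity data of $u$, which again goes through Proposition \ref{3asphlocbase}.

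For part c), I would proceed directly from the definition, using Corollary \ref{3prodfonctasph}. Given $u : A \to B$ and $u' : A' \to B'$ locally aspheric, for an object $(a,a')$ of $A \times A'$ the induced functor $(A \times A')/(a,a') \to (B \times B')/(u(a), u'(a'))$ is, under the canonical isomorphisms $(A \times A')/(a,a') \simeq (A/a) \times (A'/a')$ and likewise on the target, identified with the product $(A/a \to B/u(a)) \times (A'/a' \to B'/u'(a'))$ of the two induced functors. Each factor is aspheric by hypothesis, so their product is aspheric by Corollary \ref{3prodfonctasph}, which gives local asphericity of $u \times u'$. This part is the most routine; the only care needed is the verification of the product identification of comma categories, which is straightforward.
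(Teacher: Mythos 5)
Your part c) is correct and is exactly the paper's argument: the canonical isomorphism $(A\times A')/(a,a')\simeq(A/a)\times(A'/a')$ identifies the relevant slice functor with the product of the two slice functors of $u$ and $u'$, and Corollary~\ref{3prodfonctasph} concludes. The genuine gap is in part a), precisely at the step you flag as ``the hard part'', and it cannot be closed as you set it up. Asphericity of the functor $u/b:A/b\to B/b$ is tested on the comma categories over objects of the \emph{target} $B/b$: for an object $(b',q:b'\to b)$ of $B/b$, one computes $(A/b)/(b',q)\simeq A/b'$ --- this is exactly the identification in the proof of Proposition~\ref{3asphlocbase}, applied here with $C=B$ and $v=1_B$. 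Hence by Proposition~\ref{3asphlocbase} itself, the condition ``$u/b$ is aspheric for every object $b$ of $B$'' is equivalent to ``$u$ is aspheric'', not to ``$u$ is locally aspheric''; read literally, the asserted equivalence even fails in both directions (for instance $C\to e$ is always locally aspheric by Example~\ref{3exfonctlocasph}, while the corresponding $u/b$ is aspheric only when $C$ is). The isomorphisms you do write down, $(A/b)/(a,p)\simeq A/a$ over objects $(a,p)$ of the \emph{source} $A/b$, together with $(B/b)/(u(a),p)\simeq B/u(a)$, test \emph{local} asphericity of $u/b$; they yield, straightforwardly, that $u$ is locally aspheric if and only if each $u/b$ is \emph{locally} aspheric --- which is evidently the bookkeeping the paper's one-line proof (``the first assertion is straightforward'') refers to. So your identifications are the right ones, but they prove the local form of the criterion, not asphericity of $u/b$.

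Part b) inherits this problem because you route it through a): your reduction is to show that $(vu)/c:A/c\to C/c$ is aspheric for every object $c$ of $C$, which, again by Proposition~\ref{3asphlocbase}, is the assertion that $vu$ is \emph{aspheric} rather than locally aspheric; correspondingly, asphericity of your two factors $A/c\to B/c$ and $v/c$ would require $u$ and $v$ themselves to be aspheric, which is not among the hypotheses. The paper's proof of b) is direct and never passes through a): for an object $a$ of $A$, the slice functor $A/a\to C/c$, $c=vu(a)$, factors as $A/a\to B/b\to C/c$ with $b=u(a)$; the first factor is aspheric because $u$ is locally aspheric, the second because $v$ is locally aspheric (applied at the object $b$ of $B$), and Proposition~\ref{3compfonctasph} gives asphericity of the composite. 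You should prove b) this way; your indirect scheme could be salvaged only after restating a) in its local form, and even then it is needlessly roundabout.
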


\begin{proof}
The first assertion is straightforward, the second one is a consequence of
proposition~\ref{3compfonctasph}, and the third one is a consequence of corollary 
\ref{3prodfonctasph}.
\end{proof}

\section{Homotopical Kan extensions}

\noindent
\emph{In this paragraph, we fix, once and for all, 
a right asphericity structure $\Asph$.}

\begin{paragr}\label{3defhot}
We denote by $\FonctAsph$, or simply 
$\FAsph$, the class of
aspheric morphisms of $\cat$, and we set
$$\CHot=\CatHot=\FAsphinv\cat\quad.$$
More generally, for every small category $I$, we denote by
$\FonctAS{I}$, or simply 
$\FAS{I}$, the class of
morphisms of $\sHom(I,\cat)$ which are componentwise
aspheric, and we set
$$\CHOT{I}=\CatHOT{I}=\FASinv{I}\sHom(I,\cat)\quad.$$
For every arrow $w:J\to I$ of $\cat$, if we denote by
$$w^*:\sHom(I,\cat)\toto\sHom(J,\cat)$$
the inverse image functor, we have
$$w^*(\FAS{I})\subset\FAS{J}\quad.$$
Therefore $w^*$ induces a functor between the localized categories,
denoted 
$$w^*:\CHOT{I}\toto\CHOT{J}\quad,$$
too, such that the following square is commutative:
$$
\xymatrix{
&\sHom(I,\cat)\ar[r]^{w^*}\ar[d]_{\gamma^{}_I}
&\sHom(J,\cat)\ar[d]^{\gamma^{}_J}
\\
&\CHOT{I}\ar[r]_{w^*}
&\CHOT{J}
&\hskip -20pt,
}$$
where the vertical arrows are the canonical localization functors.
\end{paragr}

\begin{exemple}\label{3hotmin}
If $\Asph$ is the minimal right asphericity structure (\emph{cf}.~\ref{3strasphmin}),
then $\CHot$ is the category $\CatAHmtp$ of small categories
up to homotopy, \emph{i.e.}~the category with same objects as
$\cat$, and such that for every pair of small categories
$A$, $B$, 
$$\Hom^{}_\CatAHmtp(A,B)=\pi_0\,\sHom(A,B)\quad.$$
Indeed, let us denote by
$$\gamma:\cat\toto\CHot\quad,\qquad Q:\cat\toto\CatAHmtp$$
the canonical functors. By the universal properties of 
these functors, it is sufficient to show that:
\begin{itemize}
\item[\emph{a})] the image under $Q$ of an aspheric functor is a homotopism;
\item[\emph{b})] the images under $\gamma$ of two homotopic functors are equal.
\end{itemize}
In order to show the first assertion, we notice that, since
aspheric morphisms of $\cat$ with respect to the minimal right
asphericity structure are exactly the functors between small categories
which have a right adjoint (\ref{3carfonctasphmin}), they are indeed homotopisms.
In order to show the second assertion, it suffices to show that for
every morphism \hbox{$h:\smp{1}\times A\to B$} of $\cat$, where
$\smp{1}$ stands for the category $\{0\to1\}$, we have $\gamma(h_0)=\gamma(h_1)$,
where $h_\e=h(\partial_\e\times1_A)$, $\e=0,1$, and \hbox{$\partial_\e:e\to\smp{1}$}
is the morphism from the final category $e$ to $\smp{1}$ defined by
the object $\e$ of $\smp{1}$. Now, if we denote by
$p:\smp{1}\times A\to A$ the second projection, we have 
$p(\partial_0\times1_A)=1_A=p(\partial_1\times1_A)$. Since the category
$\smp{1}$ has a final object, it
follows from As1 and corollary~\ref{3prodfonctasph} that the functor $p$
is aspheric. 
Therefore $\gamma(p)$ is an isomorphism of $\CHot$, and the equalities
$$\gamma(p)\gamma(\partial_0\times1_A)=\gamma(p(\partial_0\times1_A))=
 \gamma(p(\partial_1\times1_A))=\gamma(p)\gamma(\partial_1\times1_A)\quad$$
imply the relation
$$\gamma(\partial_0\times1_A)=\gamma(\partial_1\times1_A)\quad,$$
and hence we have
$$\gamma(h_0)=\gamma(h(\partial_0\times1_A))=\gamma(h)\gamma(\partial_0\times1_A)=
 \gamma(h)\gamma(\partial_1\times1_A)=\gamma(h(\partial_1\times1_A))=\gamma(h_1)\ .$$
\end{exemple}

\begin{exemple}\label{3hotlocfond}
If $\Asph$ is the right asphericity structure associated with
a basic localizer $\W$ (\emph{cf}.~\ref{3strasphlocfond}), then it follows from the theory
developed by \hbox{D.-C.~Cisinski~\cite{C1}} that
$$\CHot\simeq\W_{\mathit{\hskip -2pt asph}}^{-1}\,\cat\quad,$$
where $\W_{\mathit{asph}}$ stands for the basic localizer
generated by the arrows $A\to e$ of $\cat$, for $A$ a $\W$\nobreakdash-aspheric category.
Indeed, let us denote by
$$
\xymatrix{
\cat\ar[r]^(.3){\gamma}
&\CHot\quad,
\qquad\cat\ar[r]^(.57){\gamma'}
&\W_{\mathit{\hskip -2pt asph}}^{-1}\,\cat
}$$
the canonical functors. By their universal properties, 
it suffices to show that:
\begin{itemize}
\item[\emph{a})] $\FAsph\subset\W_{\mathit{asph}}$;
\item[\emph{b})] $\gamma(\W_{\mathit{asph}})$ is contained in the
class of isomorphisms of $\CHot$.
\end{itemize}
The assertion (\emph{a}) is obvious. In order to prove (\emph{b}), we
first observe that 
every basic localizer being a filtered union of accessible ones,
an easy argument shows that we can 
assume that the basic localizer is accessible.  
Thomason-Cisinski theory~\cite{C1},~\cite{Th} then states that there exists
a proper closed model category structure on $\cat$ whose
weak equivalences are the elements of $\W_{\mathit{asph}}$.
This implies that $\cat$ has the structure of a category of fibrant objects 
\cite{Br}, with weak equivalences the elements
of $\W_{\mathit{asph}}$ and fibrations the arrows
$u:A\to B$ of $\cat$ such that for every diagram of cartesian squares
$$
\xymatrix{
&A''\ar[r]^{v}\ar[d]
&A'\ar[r]\ar[d]
&A\ar[d]^{u}
\\
&B''\ar[r]_{w}
&B'\ar[r]
&B
&\hskip -20pt,
}$$
if $w$ is in $\W_{\mathit{asph}}$, then so is $v$.
Hence, by Ken Brown's lemma~\cite[I.1 Factorization lemma]{Br},
in order to prove that the elements of $\gamma(\W_{\mathit{asph}})$
are isomorphisms of $\CHot$, it suffices to show that
if such a fibration is in $\W_{\mathit{asph}}$, then its image under $\gamma$
is an isomorphism. Now, such an arrow is universally
in $\W_{\mathit{asph}}$. It
is therefore a $\W_{\mathit{asph}}$\nobreakdash-aspheric morphism, and in particular
$\W$\nobreakdash-aspheric, which proves the assertion.
\smallbreak

We can easily generalize this argument to show that for
every small category $I$, the category $\CHOT{I}$ is isomorphic
to the localization of the category $\sHom(I,\cat)$ by the arrows
which are componentwise in $\W_{\mathit{asph}}$.
\end{exemple}

\begin{paragr}\label{3defasphrel}
For every object $I$ of $\cat$, we denote by $\cat/I$ the category
of small categories over $I$, whose objects are the pairs $(A,A\to I)$,
consisting of a small category $A$ and of a functor $A\to I$, and whose morphisms
are commutative triangles
$$
\xymatrixcolsep{1.2pc}
\xymatrix{
A\ar[rd]\ar[rr]
&&A'\ar[ld]
\\
&I
&.
}$$
We denote by $\FAsph/I$ the class of arrows
$$
\xymatrixcolsep{1.2pc}
\xymatrix{
A\ar[rd]\ar[rr]
&&A'\ar[ld]
\\
&I
}$$
of $\cat/I$ such that $A\to A'$ is an aspheric functor.
For every arrow $w:J\to I$ of $\cat$, we denote by
$$\cat/w:\cat/J\toto\cat/I\quad$$
the functor defined by
$$
\xymatrixrowsep{1pc}
\xymatrixcolsep{1pc}
\xymatrix{
A\ar[dd]_{v}
&&&
A\ar[dd]_{wv}
\\
&\ar@{|->}[r]
&\hskip 5pt 
\\
J
&&&I
&.
}$$
We have
$$(\cat/w)(\FAsph/J)\subset\FAsph/I\quad,$$
and thus the functor $\cat/w$ induces a functor
$$\overline{\cat/w}:(\FAsph/J)^{-1}(\cat/J)\toto(\FAsph/I)^{-1}(\cat/I)\quad.$$
\end{paragr}

\begin{paragr}\label{3propadjGr}
For every small category $I$, we define functors
$$\begin{aligned}
&{\xymatrixcolsep{2.7pc}\xymatrix{\cat/I\ar[r]^(.4){\Theta_I}&\sHom(I,\cat)}}
\quad,\qquad
{\xymatrixcolsep{2.7pc}\xymatrix{\sHom(I,\cat)\ar[r]^(.6){\Theta'_I}&\cat/I}}
\cr
\noalign{\vskip 5pt}
&\kern -15pt(A,\,A\toto I)\kern 2pt\longmapsto\kern 2pt(i\longmapsto A/i)
\quad\kern 2pt,
\kern 18mm F\kern 6pt\longmapsto\kern 6pt(\textstyle\int F,\,\int F\toto I)\quad,
\end{aligned}$$
where $\int F$ stands for Grothendieck's construction of the
cofibered category over $I$ defined by the functor $F$, and $\int F\to I$ is
the canonical functor. We recall that the objects of the category
$\int F$ are the pairs $(i,a)$, where $i$ is an object of $I$ and
$a$ is an object of $F(i)$. A morphism of $\int F$ from
$(i,a)$ to $(i',a')$ is a pair $(k,f)$, where $k:i\to i'$
is an arrow of $I$ and $f:F(k)(a)\to a'$ is an arrow of $F(i')$.
The composition in $\int F$ is defined, for a pair of composable morphisms 
$$
\xymatrixcolsep{2.5pc}
\xymatrix{
(i,a)\ar[r]^(.45){(k,f)}
&(i',a')\ar[r]^(.425){(k',f')}
&(i'',a'')\quad,
}$$
by the formula
$$(k',f')\circ(k,f)=(k'k,f'\cdot F(k')(f))\quad.$$
The canonical functor $\int F\to I$, which is a cofibration, is defined by
$$(i,a)\longmapsto i\quad,\qquad (k,f)\longmapsto k\quad.$$
For every object $i$ of $I$, the fiber over
$i$ of the functor $\int F\to I$ is canonically isomorphic to the category $F(i)$.
\smallbreak

For every morphism $w:J\to I$ of $\cat$,  
the functors 
$${\xymatrixcolsep{4.3pc}\xymatrix{\cat/J\ar[r]^(.43){\Theta_I\circ\cat/w}&\sHom(I,\cat)}}
\quad,\qquad
\xymatrixcolsep{4.pc}\xymatrix{\sHom(I,\cat)\ar[r]^(.56){\Theta'_J\circ w^*}&\cat/J
}$$
form an adjoint pair with counit and unit
$$\e:\Theta_I\circ\cat/w\circ\Theta'_J\circ w^*\toto
1_{\sHom(I,\cat)}\ ,\quad
\eta:1_{\cat/J}\toto\Theta'_J\circ w^*\circ
\Theta_I\circ\cat/w\quad$$
defined as follows.
\smallbreak

\emph{a}) \textit{Definition of $\e$.} For every functor $F:I\to \cat$,
and every object $i$ of $I$, we need to define a functor
$$\e^{}_{F,i}:\big(\Int Fw\bigr)\bigm/i\toto F(i)\quad.$$
The objects of the category $(\int Fw)/i$ are the triples
$(j,a,p:w(j)\to i)$, where $j$ is an object of $J$, $a$ is an object of $Fw(j)$,
and $p$ is an arrow of $I$. A morphism of $(\int Fw)/i$ from
$(j,a,p:w(j)\to i)$ to $(j',a',p':w(j')\to i)$ is a pair
$(l,f)$, where $l:j\to j'$ is an arrow of $J$ and $f:Fw(l)(a)\to a'$
is an arrow of $Fw(j')$ such that
$$
\raise 25pt\vbox{
\xymatrixcolsep{1pc}
\xymatrix{
w(j)\ar[rr]^{w(l)}\ar[rd]_{p}
&&w(j')\ar[ld]^{p'}
\\
&i
}
}
\kern 2cm
p=p'w(l)
\quad.
$$
We define the functor $\e^{}_{F,i}$ by
$$\begin{aligned}
&{\e^{}_{F,i}(j,a,p:w(j)\to i)=F(p)(a)\quad,}\cr
\noalign{\vskip 3pt}
&{\e^{}_{F,i}(l,f)=F(p')(f):F(p')Fw(l)(a)=F(p)(a)\toto F(p')(a')\quad.}
\end{aligned}$$
We leave it to the reader to check compatibility
with respect to composition and identities, as well as functoriality
in $i$ and in $F$.
\smallbreak

\emph{b}) \textit{Definition of $\eta$.} For every object $(A, v:A\to J)$ of $\cat/J$,
we need to define a functor
$$\eta^{}_{(A,v)}:A\toto\Int A/w(j)\quad,$$
over $J$,
where $\int A/w(j)$ denotes, by a slight abuse, the cofibered category
over $J$ defined by the functor
$$J\toto\cat\quad,\qquad j\longmapsto A/w(j)\quad.$$
The objects of $\int A/w(j)$ are the triples $(j,a,p:wv(a)\to w(j))$, where $j$
is an object of $J$, $a$ is an object of $A$ and $p$ is an arrow of $I$.
A morphism
$$(j,a,p:wv(a)\to w(j))\toto(j',a',p':wv(a')\to w(j'))\quad$$
is a pair $(l,f)$, where $l:j\to j'$ is an arrow of $J$ and
$f:a\to a'$ is an arrow of $A$ such that the square
$$
\xymatrixcolsep{3pc}
\xymatrix{
wv(a)\ar[r]^{wv(f)}\ar[d]_{p}
&wv(a')\ar[d]^{p'}
\\
w(j)\ar[r]_{w(l)}
&w(j')
}
$$
is commutative. For every object $a$ of $A$, we define
$$\eta^{}_{(A,v)}(a)=\bigl(v(a),a,1_{wv(a)}:wv(a)\to wv(a)\bigr)\quad,$$
and for every arrow $f:a\to a'$ of $A$,
$$\eta^{}_{(A,v)}(f)=\bigl(v(f),f\bigr):
\bigl(v(a),a,1_{wv(a)}
\bigr)\toto
\bigl(v(a'),a',1_{wv(a')}
\bigr).$$
Compatibility with respect to composition and identities,
as well as functoriality in~$(A,v)$, are easily checked.
\smallbreak

We leave it to the reader to check the triangle identities
$$\bigl((\Theta'_J\circ w^*)\star\varepsilon\bigr)
  \bigl(\eta\star(\Theta'_J\circ w^*)\bigr)\kern -.5pt=\kern -.5pt1_{\Theta'_J\circ w^*}
  \, ,\ 
  \bigl(\varepsilon\star(\Theta_I\circ\cat/w)\bigr)
  \bigl((\Theta_I\circ\cat/w)\star\eta\bigr)\kern -.5pt=\kern -.5pt1_{\Theta_I\circ\cat/w}
$$
$$
\xymatrixcolsep{3.7pc}
\xymatrix{
\Theta'_J\circ w^*\ar[r]^(.30){\eta\star(\Theta'_J\circ w^*)}
&\Theta'_J\circ w^*\circ\Theta_I\circ\cat/w\circ\Theta'_J\circ w^*\ar[r]^(.67){(\Theta'_J\circ w^*)\star\varepsilon}
&\Theta'_J\circ w^*
}$$
$$
\xymatrixcolsep{4.3pc}
\xymatrix{
\Theta_I\kern -.5pt\circ\kern -.5pt\cat/w\ar[r]^(.33){(\Theta_I\circ\cat/w)\star\eta}
&\Theta_I\kern -.5pt\circ\kern -.5pt\cat/w\kern -.5pt\circ\kern -.5pt\Theta'_J\kern -.5pt\circ\kern -.5pt w^*\kern -.5pt\circ\kern -.5pt\Theta_I\kern -.5pt\circ\kern -.5pt\cat/w\ar[r]^(.66){\varepsilon\star(\Theta_I\circ\cat/w)}
&\Theta_I\kern -.5pt\circ\kern -.5pt\cat/w
}$$
which prove that $(\Theta_I\circ\cat/w,\,\Theta'_J\circ w^*)$ 
is indeed an adjoint pair.
\smallbreak

As a special case, for $J=I$ and $w=1_I$, we get that
$(\Theta_I,\Theta'_I)$ is an adjoint pair.
\end{paragr}

\begin{lemme}\label{3lemmeclef}
Let $I$ be a small category, $F,G:I\to\cat$ be two functors
and \hbox{$u:F\to G$} be a natural transformation. If, for every object $i$
of $I$, the functor \hbox{$u_i:F(i)\to G(i)$} is aspheric, then the functor
$$\Int u:\int F\toto\Int G\quad$$
is aspheric.
\end{lemme}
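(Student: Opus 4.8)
The plan is to prove that for every object $(i',b')$ of $\int G$ (with $i'\in\ob(I)$ and $b'$ an object of $G(i')$) the comma category $(\int F)/(i',b')$ is aspheric; by the definition of an aspheric morphism this gives that $\int u$ is aspheric. First I would record the explicit form of $\int u$: it sends an object $(i,a)$ to $(i,u_i(a))$, and a morphism $(k,f)\colon(i,a)\to(i_1,a_1)$ (so $k\colon i\to i_1$ in $I$ and $f\colon F(k)(a)\to a_1$ in $F(i_1)$) to $(k,u_{i_1}(f))$, the naturality of $u$ ensuring that $u_{i_1}(f)$ is an arrow $G(k)(u_i(a))\to u_{i_1}(a_1)$. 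Unwinding the definition of the comma category, an object of $(\int F)/(i',b')$ is then a quadruple $(i,a,k,f)$ with $i\in\ob(I)$, $a$ an object of $F(i)$, $k\colon i\to i'$ an arrow of $I$, and $f\colon G(k)(u_i(a))\to b'$ an arrow of $G(i')$.

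I would then exhibit $(\int F)/(i',b')$ as a Grothendieck construction over $I/i'$. Define $H\colon I/i'\to\cat$ on objects by sending $(i,k)$ to the comma category $(G(k)\,u_i)/b'$ of the composite $G(k)\,u_i\colon F(i)\to G(i')$ over the object $b'$, and on a morphism $m\colon(i,k)\to(i_1,k_1)$ of $I/i'$ (so $k_1 m=k$) to the functor $(a,f)\mapsto(F(m)(a),f)$, which is well defined by naturality of $u$. A direct check shows that $(\int F)/(i',b')$ is isomorphic, over $I/i'$, to $\int_{I/i'}H$, the quadruple $(i,a,k,f)$ corresponding to the object $((i,k),(a,f))$; hence the projection $P\colon(\int F)/(i',b')\to I/i'$, $(i,a,k,f)\mapsto(i,k)$, is the canonical functor of a Grothendieck construction, and in particular a cofibration (\emph{cf.}~\ref{3propadjGr}).

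At this point the naive move is to invoke Proposition~\ref{3precoffibasph}, which would require every fiber $H(i,k)=(G(k)\,u_i)/b'$ to be aspheric; this is the main obstacle, for it is \emph{false} in general (already for the minimal asphericity structure such comma categories need not possess a final object). The remedy is to exploit that $I/i'$ has a final object, namely $(i',1_{i'})$. The fiber of the cofibration $P$ over this object is $H(i',1_{i'})=F(i')/b'$, the comma category of $u_{i'}$ over $b'$, which \emph{is} aspheric, precisely because $u_{i'}$ is an aspheric morphism. Writing $A=(\int F)/(i',b')$ and using that, $(i',1_{i'})$ being final in $I/i'$, there is a canonical isomorphism $A/(i',1_{i'})\simeq A$, Lemma~\ref{3carprecof} (applicable since $P$ is a precofibration) provides a left adjoint $L$ to the fiber inclusion $F(i')/b'\to A$. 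By Proposition~\ref{3adjgasph}, $L$, having a right adjoint, is aspheric; since its codomain $F(i')/b'$ is aspheric, condition~As2 yields that $A=(\int F)/(i',b')$ is aspheric, as desired. The only real computations are the identification $(\int F)/(i',b')\simeq\int_{I/i'}H$ and the determination of the fiber of $P$ over $(i',1_{i'})$, both routine diagram chases of the kind left to the reader elsewhere in the paper.
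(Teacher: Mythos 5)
Your argument is correct, but it takes a genuinely different route from the paper's. The paper proceeds globally: it introduces the functor $H:\int G\to\cat$, $(i,b)\mapsto F(i)/b$, factors $\int u$ as $\Cofibr{H}\circ S$ through $\int H$, shows that $S$ admits an explicit right adjoint $R$ (hence is aspheric by Proposition~\ref{3adjgasph}) and that $\Cofibr{H}$ is a cofibration whose fibers $F(i)/b$ are aspheric (hence is aspheric by Proposition~\ref{3precoffibasph}), and concludes by stability of aspheric morphisms under composition (Proposition~\ref{3compfonctasph}). You instead verify the definition of asphericity one comma category at a time, identifying $(\int F)/(i',b')$ with a Grothendieck construction over $I/i'$ and exploiting the final object $(i',1_{i'})$ of $I/i'$: only the fiber over that object, namely $F(i')/b'$, needs to be aspheric, and the left adjoint to its inclusion supplied by Lemma~\ref{3carprecof}, combined with Proposition~\ref{3adjgasph} and As2, finishes the job --- in effect you rerun the proof of Proposition~\ref{3precoffibasph} localized at the final object. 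Your warning that the other fibers $(G(k)\,u_i)/b'$ need not be aspheric (they can even be empty for the minimal structure), so that Proposition~\ref{3precoffibasph} cannot be applied naively to the projection $P$, is accurate and is exactly the point where the detour through the final object is needed; I also checked that naturality of $u$ does make $H(m)(a,f)=(F(m)(a),f)$ well defined and that the isomorphism $(\int F)/(i',b')\simeq\int_{I/i'}H$ respects composition. Both proofs rest on the same three ingredients (the canonical functor of a Grothendieck construction is a cofibration, functors with right adjoints are aspheric, and As2); the paper's factorization is more structural and avoids any fiberwise unwinding of $(\int F)/(i',b')$, while yours is more economical in that it introduces no auxiliary category over $\int G$ and no verification of triangle identities for $S\dashv R$, at the cost of the routine but unavoidable identification of $(\int F)/(i',b')$ as $\int_{I/i'}H$.
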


\begin{proof}
We define a functor
$$H:\Int G\toto\cat\quad$$
as follows. For every object $(i,b)$ of $\int G$, $i\in\ob(I)$, $b\in\ob(G(i))$,
we set
$$H(i,b)=F(i)/b\quad,$$ 
and for every morphism
$(k,g):(i,b)\to(i',b')$ of $\int G$, where $k:i\to i'$ is an
arrow of $I$ and $g:G(k)(b)\to b'$ is an arrow of $G(i')$, we define 
$$H(k,g):F(i)/b\toto F(i')/b'\quad$$
by
$$(a,\,p:u_i(a)\to b)\longmapsto (F(k)(a),\,g\cdot G(k)(p):u_{i'}F(k)(a)\to b')\quad.$$

$$
\xymatrixcolsep{.3pc}
\xymatrixrowsep{.5pc}
\xymatrix{
u_{i'}F(k)(a)\ar@{=}[d]\ar[rr]
&&b'
\\
G(k)u_i(a)\ar[rdd]_{G(k)(p)}
\\
\\
&G(k)(b)\ar[ruuu]_{g}
}$$
\bigbreak

\noindent
Let us consider the category $\int H$. The objects of this category are
the quadruples $(i,\,b,\,a,\,p:u_i(a)\to b)$, $i\in \ob(I)$, $b\in\ob(G(i))$, $a\in\ob(F(i))$,
$p\in\fl(G(i))$. A~morphism from $(i,b,a,p)$ to
$(i',b',a',p')$ is a triple $(k,g,f)$
$$k:i\to i'\,\in\,\fl(I)\ ,\ \ \ g:G(k)(b)\to
b'\,\in\,\fl(G(i'))\ ,\ \ \ f:F(k)(a)\to a'\,\in\,\fl(F(i'))$$
such that the square
$$
\xymatrixrowsep{.5pc}
\xymatrix{
u_{i'}F(k)(a)\ar@{=}[d]\ar[rr]^(.54){u_{i'}(f)}
&&u_{i'}(a')\ar[dddd]^{p'}
\\
G(k)u_i(a)\ar[ddd]_{G(k)(p)}
\\
\\
\\
G(k)(b)\ar[rr]_(.55){g}
&&b'
}$$
is commutative. For every pair of composable morphisms
$$
\xymatrixcolsep{3.8pc}
\xymatrix{
(i,\,b,\,a,\,p)\ar[r]^(.46){(k,\,g,\,f)}
&(i',\,b',\,a',\,p')\ar[r]^(.46){(k',\,g',\,f')}
&(i'',\,b'',\,a'',\,p'')
}\quad,$$
the composite morphism is defined by
$$(k',\,g',\,f')\circ(k,\,g,\,f)=(k'k,\,g'\cdot G(k')(g),\,f'\cdot F(k')(f))\quad.$$
The canonical functor $\Cofibr{H}:\int H\to\int G$ is defined by
$$(i,b,a,p)\mapsto (i,b)\, ,\ (i,b,a,p)\,\in\,\ob(\Int H)\, ,
\quad (k,g,f)\mapsto (k,g)\, ,\ (k,g,f)\,\in\,\fl(\Int H)\, .$$
We shall define a functor $S:\int F\to\int H$ such that the triangle
$$
\xymatrixcolsep{1.8pc}
\xymatrix{
&\int H\ar[rd]^{\Cofibr{H}}
\\
\int F\ar[ru]^{S}\ar[rr]_{\int u}
&&\int G
}$$
is commutative. For every object $(i,a)$ of $\int F$,
$i\in\ob(I)$, $a\in\ob(F(i))$, we set 
$$S(i,a)=(i,\,u_i(a),\,a,\,1_{u_i(a)})\quad,$$
and for every morphism $(k,f):(i,a)\to(i',a')$ of $\int F$,
where $k:i\to i'$ is an arrow of $I$ and $f:F(k)(a)\to a'$ is an
arrow of $F(i')$,
$$S(k,f)=(k,\,u_{i'}(f),\,f):(i,\,u_i(a),\,a,\,1_{u_i(a)})\toto(i',\,u_{i'}(a'),\,a',\,1_{u_{i'}(a')})\quad.$$
It is straightforward to check the compatibility of $S$ with identities and
composition, as well as to check the commutativity of the triangle above. We
shall define a functor $R:\int H\to\int F$ which will be a right adjoint
to $S$ as well as a retraction to it. For every object $(i,b,a,p)$ of
$\int H$, we set $R(i,b,a,p)=(i,a)$, and for every morphism
$(k,g,f):(i,b,a,p)\to(i',b',a',p')$ of $\int H$, we set $R(k,g,f)=(k,f)$.
Compatibility of $R$ with identities and composition is
obvious, and so is the equality $RS=1_{\int F}$. We define a
natural transformation $\e:SR\to1_{\int H}$ as follows.
For every object $(i,\,b,\,a,\,p:u_i(a)\to b)$ of $\int H$, we set
$$\e_{(i,b,a,p)}=(1_i,p,1_a):SR(i,b,a,p)=S(i,a)=(i,\,u_i(a),\,a,\,1_{u_i(a)})\toto(i,b,a,p)\quad,$$
and we notice immediately that we have thus defined a morphism of $\int H$. 
For every morphism $(k,g,f):(i,b,a,p)\to(i',b',a',p')$ of $\int H$, the square
$$
\xymatrixcolsep{4pc}
\xymatrixrowsep{2.5pc}
\xymatrix{
SR(i,b,a,p)\ar[r]^{\e_{(i,b,a,p)}}\ar[d]_{SR(k,g,f)}
&(i,b,a,p)\ar[d]^{(k,g,f)}
\\
SR(i',b',a',p')\ar[r]^{\e_{(i',b',a',p')}}
&(i',b',a',p')
}$$
is commutative, as expected. Indeed,
$$\begin{aligned}
(k,g,f)\e_{(i,b,a,p)}&=(k,g,f)(1_i,p,1_a)=(k,g\,G(k)(p),f)\quad,\cr
\noalign{\vskip 3pt}
\e_{(i',b',a',p')}SR(k,g,f)&=(1_{i'},p',1_{a'})S(k,f)\cr
&=(1_{i'},p',1_{a'})(k,u_{i'}(f),f)=(k,p'u_{i'}(f),f)
\end{aligned}$$
and $p'u_{i'}(f)=g\,G(k)(p)$, since $(k,g,f)$ is a morphism of $\int H$.
From this we deduce that $\e:SR\to1_{\int H}$ is indeed a natural transformation.
Finally, for every object $(i,b,a,p)$ of $\int H$, we have
$$R(\e_{(i,b,a,p)})=R(1_i,p,1_a)=(1_i,1_a)\quad,$$
and for every object $(i,a)$ of $\int F$, we have
$$\e_{S(i,a)}=\e_{(i,u_{i}(a),a,1_{u_{i}(a)})}=(1_i,1_{u_{i}(a)},1_a)\quad,$$
which proves that 
$$\e:SR\toto1_{\int H}\qquad\hbox{and}\qquad 1_{1_{\int F}}:1_{\int F}\toto RS=1_{\int F}\quad$$
satisfy the triangle identities and that $R$ is a right adjoint to $S$.
Therefore, it follows from proposition~\ref{3adjgasph} that $S$ is an aspheric functor.
Since, for every object~$i$ of $I$, $u_i$ is an
aspheric functor by hypothesis, $\Cofibr{H}$ is a cofibration whose fibers 
are aspheric. From this we deduce that the functor $\Cofibr{H}$ is
aspheric (proposition~\ref{3precoffibasph}),
and therefore that so is the composite morphism $\int u=\Cofibr{H}S$
(proposition~\ref{3compfonctasph}), which finishes the proof.
\end{proof}

\begin{thm}\label{3theqcatGr}
For every small category $I$, we have
$$\FAsph/I=\Theta^{-1}_I(\FAS{I})\quad,\qquad\Theta'_I(\FAS{I})\subset\FAsph/I\quad$$
and the functors
$$\overline{\Theta}_I:(\FAsph/I)^{-1}(\cat/I)\toto\FASinv{I}\sHom(I,\cat)=\CHOT{I}
  \quad\phantom{,}$$
and
$$\overline{\Theta'_I}:\CHOT{I}=\FASinv{I}\sHom(I,\cat)\toto(\FAsph/I)^{-1}(\cat/I)\quad,$$
induced by $\Theta_I$ and $\Theta'_I$ respectively, are equivalences of categories, quasi-inverse to each other.
\end{thm}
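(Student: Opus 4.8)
The plan is to deal with the two displayed formulas first, and then to deduce the equivalence from the adjunction $(\Theta_I,\Theta'_I)$ established in~\ref{3propadjGr}, by showing that its unit and counit become isomorphisms after localization.

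First I would prove the equality $\FAsph/I=\Theta^{-1}_I(\FAS{I})$. A morphism of $\cat/I$ from $(A,f:A\to I)$ to $(A',f':A'\to I)$ is a functor $g:A\to A'$ with $f'g=f$, and its image under $\Theta_I$ is the natural transformation whose component at an object $i$ of $I$ is the induced functor $g/i:A/i\to A'/i$. Hence $g$ belongs to $\Theta^{-1}_I(\FAS{I})$ if and only if $g/i$ is aspheric for every $i$, whereas $g$ belongs to $\FAsph/I$ if and only if $g$ itself is aspheric. Applying proposition~\ref{3asphlocbase} to the commutative triangle formed by $g$, $f$ and $f'$ (with $C=I$) shows precisely that these two conditions are equivalent, whence the equality. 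As for the inclusion $\Theta'_I(\FAS{I})\subset\FAsph/I$, it is exactly lemma~\ref{3lemmeclef}: if a natural transformation $u:F\to G$ is componentwise aspheric, then $\int u:\int F\to\int G$ is aspheric, that is, $\Theta'_I(u)\in\FAsph/I$.

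These two facts show that $\Theta_I$ maps $\FAsph/I$ into $\FAS{I}$ and that $\Theta'_I$ maps $\FAS{I}$ into $\FAsph/I$, so that both functors descend to the functors $\overline{\Theta}_I$ and $\overline{\Theta'_I}$ of the statement. Consequently the composite endofunctors $\Theta'_I\Theta_I$ of $\cat/I$ and $\Theta_I\Theta'_I$ of $\sHom(I,\cat)$ preserve $\FAsph/I$ and $\FAS{I}$ respectively, so the unit $\eta:1\to\Theta'_I\Theta_I$ and the counit $\e:\Theta_I\Theta'_I\to1$ of the adjunction descend to natural transformations $\overline{\eta}:1\to\overline{\Theta'_I}\,\overline{\Theta}_I$ and $\overline{\e}:\overline{\Theta}_I\,\overline{\Theta'_I}\to1$. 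Since a morphism of $\FAsph/I$ (resp.~of $\FAS{I}$) becomes invertible in the corresponding localized category, it suffices to check that each component of $\eta$ lies in $\FAsph/I$ and each component of $\e$ lies in $\FAS{I}$: then $\overline{\eta}$ and $\overline{\e}$ are natural isomorphisms, and therefore $\overline{\Theta}_I$ and $\overline{\Theta'_I}$ are equivalences of categories, quasi-inverse to each other.

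The only genuine work, which I expect to be the main point, is this last verification; in each case the cleanest route is to exhibit a right adjoint and invoke proposition~\ref{3adjgasph}. For the counit, the component $\e_{F,i}:(\int F)/i\to F(i)$, $(j,a,p)\mapsto F(p)(a)$, admits the section $b\mapsto(i,b,1_i)$ as a right adjoint: a morphism $(j,a,p:j\to i)\to(i,b,1_i)$ of $(\int F)/i$ consists of an arrow $l:j\to i$ of $I$, forced to equal $p$, together with an arrow $F(p)(a)\to b$ of $F(i)$, so it reduces to the latter, which yields the required bijection, natural in $(j,a,p)$ and in $b$. Hence $\e_{F,i}$ is aspheric for every $i$, i.e.\ $\e_F\in\FAS{I}$. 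For the unit, write $\int(A/j)$ for the cofibered category over $I$ defined by $j\mapsto A/j$; the component $\eta_{(A,v)}:A\to\int(A/j)$, $a\mapsto(v(a),a,1_{v(a)})$, admits the projection $(j,a,p)\mapsto a$ as a right adjoint: a morphism $\eta_{(A,v)}(a')\to(j,a,p)$ consists of an arrow $f:a'\to a$ of $A$ together with an arrow $v(a')\to j$ of $I$ forced to equal $p\,v(f)$, so it reduces to the datum of $f$, which again yields the required natural bijection. Hence $\eta_{(A,v)}$ is aspheric, i.e.\ $\eta_{(A,v)}\in\FAsph/I$. This completes the verification and the proof.
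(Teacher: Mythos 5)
Your proposal is correct and follows essentially the same route as the paper: the equality via proposition~\ref{3asphlocbase}, the inclusion via lemma~\ref{3lemmeclef}, then descending the adjunction $(\Theta_I,\Theta'_I)$ to the localized categories and checking that each $\e_{F,i}$ and each $\eta_{(A,v)}$ admits a right adjoint (the section $a\mapsto(i,a,1_i)$ and the projection $(j,a,p)\mapsto a$, respectively), hence is aspheric by proposition~\ref{3adjgasph}. The only difference is that you write out the hom-set bijections for these adjunctions explicitly where the paper leaves them as an easy verification.
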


\begin{proof}
The equality $\FAsph/I=\Theta^{-1}_I(\FAS{I})$ follows from
proposition~\ref{3asphlocbase}, and the inclusion $\Theta'_I(\FAS{I})\subset\FAsph/I$
from the previous lemma. From this we deduce an adjoint pair of functors
$$\overline{\Theta}_I:(\FAsph/I)^{-1}(\cat/I)\toto\CHOT{I}\ \ ,\quad\ 
  \overline{\Theta'_I}:\CHOT{I}\toto(\FAsph/I)^{-1}(\cat/I)$$
the adjunction morphisms being induced by the adjunction morphisms
$$\e:\Theta_I\Theta'_I\toto1_{\sHom(I,\cat)}\quad,\qquad
  \eta:1_{\cat/I}\toto\Theta'_I\Theta_I\quad$$
(\emph{cf}.~\ref{3propadjGr}). Therefore, it is sufficient to show that for every functor $F:I\to\cat$,
the natural transformation $\e_F$ is in $\FAS{I}$ and that for every object $(A,v:A\to I)$
of $\cat/I$, the morphism $\eta_{(A,v)}$ of $\cat/I$ is in $\FAsph/I$.
\smallbreak

\emph{a}) By~\ref{3propadjGr}, for every object $i$ of
$I$, $\e_{F,i}$ is the morphism
$$\bigl(\Int F\bigr)\bigm/i\toto F(i)\quad$$
which sends an object $(j,a,p:j\to i)$ of $\bigl(\Int F\bigr)\!\bigm/\!i$, 
$\,j\in\ob(I)$, $a\in\ob(F(j))$, $p\in\fl(I)$, to the object $F(p)(a)$ of $F(i)$.
It is easy to check that this functor is a left adjoint to the inclusion functor
$$F(i)\toto\bigl(\Int F\bigr)\bigm/i\quad,\qquad a\longmapsto(i,a,1_i)\quad,$$
(\emph{cf}.~lemma~\ref{3carprecof}), which proves, by proposition~\ref{3adjgasph},
that it is aspheric. From this we deduce that $\e_F$ is componentwise aspheric, 
which proves the assertion relative to $\e_F$.
\smallbreak

\emph{b}) By~\ref{3propadjGr}, the morphism $\eta_{(A,v)}$ is the inclusion
$$A\toto\Int A/i\quad,\qquad a\longmapsto(v(a),a,1_{v(a)})\quad,$$
where, by an abuse of notation, $\int A/i$ stands for the cofibered category
over $I$ defined by the functor
$$I\toto\cat\quad,\qquad i\longmapsto A/i\quad.$$
It is easy to check that this functor is a left adjoint to the functor
$$\Int A/i\toto A\quad,\qquad(i,a,p:v(a)\to i)\longmapsto a\quad$$
(\emph{cf}.~proof of the lemma~\ref{3lemmeclef}), which proves that it is aspheric
(\ref{3adjgasph}) and finishes the proof of the theorem.
\end{proof}

\begin{paragr}\label{3defimdirh}
For every arrow $w:J\to I$ of $\cat$, we denote by
$$\imdir{w}:\sHom(J,\cat)\toto\sHom(I,\cat)\quad$$ 
the composite functor
$\imdir{w}=\Theta_I\circ\cat/w\circ\Theta'_J$.
$$
\xymatrixcolsep{2.8pc}
\xymatrix{
\sHom(J,\cat)\ar[r]^(.6){\Theta'_J}
&\cat/J\ar[r]^{\cat/w}
&\cat/I\ar[r]^(.4){\Theta_I}
&\sHom(I,\cat)
}$$
Since, by theorem~\ref{3theqcatGr} and the considerations of paragraph~\ref{3defasphrel}, we have 
$$\Theta'_J(\FAS{J})\subset\FAsph/J\, ,\ \ (\cat/w)(\FAsph/J)\subset\FAsph/I\, ,\ \ 
\Theta_I(\FAsph/I)\subset\FAS{I}\, ,$$
this functor induces a functor between the localized categories,
also denoted, by an abuse of notation, by
$$\imdirh{w}:\CHOT{J}\toto\CHOT{I}\quad,$$
which is the composite of the functors
$$
\xymatrixcolsep{2.8pc}
\xymatrix{
\CHOT{J}\ar[r]^(.35){\overline{\Theta'_J}}
&(\FAsph/J)^{-1}(\cat/J)\ar[r]^{\overline{\cat/w}}
&(\FAsph/I)^{-1}(\cat/I)\ar[r]^(.65){\overline{\Theta_I}}
&\CHOT{I}\ ,
}$$
induced by $\Theta'_J$, $\cat/w$ and $\Theta_I$. We have a commutative square
$$
\xymatrixcolsep{2.8pc}
\xymatrix{
\sHom(J,\cat)\ar[r]^{\imdir{w}}\ar[d]_{\gamma^{}_J}
&\sHom(I,\cat)\ar[d]^{\gamma^{}_I}
\\
\CHOT{J}\ar[r]_{\imdirh{w}}
&\CHOT{I}
}$$
whose vertical arrows are the localization functors. 
\end{paragr}

\begin{thm}\label{3imdirh}
For every morphism $w:J\to I$ of $\cat$, the functors
$$\imdirh{w}:\CHOT{J}\toto\CHOT{I}\quad,\qquad w^*:\CHOT{I}\toto\CHOT{J}\quad$$
form an adjoint pair.
\end{thm}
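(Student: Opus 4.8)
The plan is to descend the adjunction of~\ref{3propadjGr} to the localized categories and then to combine it with the equivalences of Theorem~\ref{3theqcatGr}. Set $L=\Theta_I\circ\cat/w$ and $R=\Theta'_J\circ w^*$, so that, by~\ref{3propadjGr}, $(L,R)$ is an adjoint pair of functors between $\cat/J$ and $\sHom(I,\cat)$, with unit $\eta$ and counit $\e$. The inclusions recalled in~\ref{3defimdirh} and in Theorem~\ref{3theqcatGr} show that $L$ sends $\FAsph/J$ into $\FAS{I}$ and that $R$ sends $\FAS{I}$ into $\FAsph/J$; hence $L$ and $R$ induce functors
$$\overline{L}:(\FAsph/J)^{-1}(\cat/J)\toto\CHOT{I}\quad,\qquad\overline{R}:\CHOT{I}\toto(\FAsph/J)^{-1}(\cat/J)$$
between the localized categories, and by construction $\overline{L}=\overline{\Theta_I}\circ\overline{\cat/w}$ and $\overline{R}=\overline{\Theta'_J}\circ w^*$.

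The crucial step, and the only non-formal one, is to check that $\overline{L}$ is left adjoint to $\overline{R}$. This is an instance of the general principle that an adjunction descends to localizations as soon as both adjoint functors preserve the classes of arrows to be inverted. Concretely, I would whisker the unit $\eta$ with the localization functor $\gamma^{}_{\cat/J}:\cat/J\toto(\FAsph/J)^{-1}(\cat/J)$ to obtain a natural transformation $\gamma^{}_{\cat/J}\toto\overline{R}\,\overline{L}\,\gamma^{}_{\cat/J}$; by the two-dimensional universal property of the localization (precomposition with $\gamma^{}_{\cat/J}$ being a bijection between natural transformations of functors out of the localization and natural transformations of the whiskered functors), this factors through a unique natural transformation $\overline{\eta}:1\toto\overline{R}\,\overline{L}$, and likewise $\e$ yields a unique $\overline{\e}:\overline{L}\,\overline{R}\toto 1$. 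The triangle identities for $(\overline{\eta},\overline{\e})$ follow from those for $(\eta,\e)$ by whiskering with the localization functors and appealing to the uniqueness part of the same universal property, whence $\overline{L}\dashv\overline{R}$.

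It then remains only to assemble the pieces. By~\ref{3defimdirh}, $\imdirh{w}=\overline{\Theta_I}\circ\overline{\cat/w}\circ\overline{\Theta'_J}=\overline{L}\circ\overline{\Theta'_J}$. By Theorem~\ref{3theqcatGr}, $\overline{\Theta'_J}$ is an equivalence with quasi-inverse $\overline{\Theta_J}$, which I promote to an adjoint equivalence so that $\overline{\Theta'_J}\dashv\overline{\Theta_J}$. Composing this adjunction with $\overline{L}\dashv\overline{R}$ gives
$$\imdirh{w}=\overline{L}\circ\overline{\Theta'_J}\ \dashv\ \overline{\Theta_J}\circ\overline{R}=\overline{\Theta_J}\circ\overline{\Theta'_J}\circ w^*\quad.$$
Since $\overline{\Theta_J}\circ\overline{\Theta'_J}\cong 1^{}_{\CHOT{J}}$, the right-hand functor is isomorphic to $w^*$, and therefore $\imdirh{w}$ is left adjoint to $w^*$, as asserted. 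I expect the descent of the adjunction in the second paragraph to be the main obstacle, everything else being a formal manipulation of adjoint pairs and equivalences.
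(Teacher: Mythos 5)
Your proposal is correct and follows essentially the same route as the paper: descend the adjunction $(\Theta_I\circ\cat/w,\,\Theta'_J\circ w^*)$ of~\ref{3propadjGr} to the localized categories using the inclusions of~\ref{3defasphrel} and theorem~\ref{3theqcatGr}, then compose with the equivalence $(\overline{\Theta}_J,\overline{\Theta'_J})$ to identify the right adjoint with $w^*$. You merely make explicit, via the $2$\nobreakdash-dimensional universal property of localization, the descent step that the paper leaves implicit.
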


\begin{proof}
It follows from~\ref{3defhot},~\ref{3defasphrel},~\ref{3propadjGr} and from theorem~\ref{3theqcatGr}
that the pair of functors
$(\overline\Theta_I\circ\overline{\cat/w}\,,\,\overline{\Theta'_J}\circ w^*)$
$$\displaylines{
{\xymatrixcolsep{2.9pc}
\xymatrix{
(\FAsph/J)^{-1}(\cat/J)\ar[r]^{\overline{\cat/w}}
&(\FAsph/I)^{-1}(\cat/I)\ar[r]^(.65){\overline{\Theta}_I}
&\CHOT{I}
}
}\cr
\noalign{\vskip 3pt}
{\xymatrixcolsep{2.2pc}\hskip -.1cm
\xymatrix{
\CHOT{I}\ar[r]^{w^*}
&\CHOT{J}\ar[r]^(.33){\overline{\Theta'_J}}
&(\FAsph/J)^{-1}(\cat/J)
}
}
}$$
is an adjoint pair. By theorem~\ref{3theqcatGr}, the functors
$\overline{\Theta}_J$ and $\overline{\Theta'_J}$ are equivalences of
categories quasi-inverse to each other, therefore
$w^*\simeq\overline{\Theta}_J\circ(\overline{\Theta'_J}\circ w^*)$
is a right adjoint to
$(\overline{\Theta}_I\circ\overline{\cat/w})\circ\overline{\Theta'_J}=\imdirh{w}$,
which proves the theorem. 
\end{proof}

\begin{rem}
If $\Asph$ is the right asphericity structure associated with a
basic localizer $\W$ (\emph{cf}.~\ref{3strasphlocfond}), it follows from the theory
developed by \hbox{D.-C.~Cisinski~\cite{C1}} that the functor
$$\imdirh{w}:\CHOT{J}\toto\CHOT{I}\quad$$
is canonically isomorphic to the left derived functor of the left adjoint of the inverse image functor
$$w^*:\sHom(I,\cat)\toto\sHom(J,\cat)\quad.$$
\end{rem}

\section{Smooth functors}

\noindent
\emph{In this paragraph, we fix, once and for all, 
a right asphericity structure~$\Asph$.}

\begin{definition} \label{3defaiblisse}
A morphism $u:A\to B$ of $\cat$ is said to be
\emph{weakly \hbox{$\Asph$-smooth}}, or more simply 
\emph{weakly smooth}, if for every object $b$ of $B$, the canonical morphism
$$j_b:A_b\toto b\backslash A\quad,\qquad
  a\mapsto (a,1_b:b\to u(a))\ ,\quad a\in\ob(A_b)\ ,$$
is aspheric.
\end{definition}

\begin{exemple} \label{3exflisse}
A prefibration is a weakly smooth morphism.
Indeed, for every object $b$ of $B$, the functor $j_b$
has a right adjoint (dual of lemma~\ref{3carprecof}), and therefore it
is aspheric by proposition~\ref{3adjgasph}.
If $\Asph$ is the minimal right asphericity structure (\ref{3strasphmin}),
it follows from the characterization of aspheric functors for
this structure (\ref{3carfonctasphmin}) and from the dual of lemma~\ref{3carprecof}
that the weakly smooth morphisms with respect to the minimal right asphericity structure are exactly the prefibrations.
\end{exemple}

\begin{prop}\label{3carflisse}
Let $u:A\to B$ be a morphism of $\cat$. The following conditions are equivalent:
\begin{itemize}
\item[(a)] $u$ is weakly smooth;
\item[(b)] for every object $a$ of $A$, the fibers of the morphism
$$A/a\toto B/b\quad,\qquad b=u(a)\quad,$$
induced by $u$, are aspheric;
\item[(c)] for every diagram of cartesian squares
$$
\xymatrix{
&A''\ar[r]\ar[d]
&A'\ar[r]\ar[d]
&A\ar[d]^{u}
\\
&\smp{0}\ar[r]
&\smp{1}\ar[r]
&B
&\hskip -20pt,
\hskip 30pt}$$
where $\smp{0}\to\smp{1}$ stands for the inclusion $\{0\}\hookrightarrow\{0\to 1\}$, 
the morphism $A''\to A'$ is aspheric;
\item[(d)] for every arrow $g:b_0\to b_1$ of $B$, and every object
$a_1$ of $A_{b_1}$, the category $A(a_1,g)$, the objects of which are
the arrows $f:a\to a_1$ of $A$ whose target is $a_1$ and which lift $g$
\hbox{\emph{(i.e.~$u(f)=g$),}} and the morphisms of which are the commutative triangles in $A$
$$
\xymatrixrowsep{.6pc}
\xymatrixcolsep{2.6pc}
\xymatrix{
a\ar[rd]^{f}\ar[dd]_{h}
\\
&a_1
\\
a'\ar[ru]_{f'}
&
}$$
such that $h$ is a morphism of $A_{b_0}$ \emph{(i.e.~$u(h)=1_{b_0}$),} is aspheric.
\end{itemize}
\end{prop}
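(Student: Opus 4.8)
The plan is to reduce all four conditions to the most explicit one, condition (d), which asserts that for every arrow $g:b_0\to b_1$ of $B$ and every object $a_1$ of $A_{b_1}$ the category $A(a_1,g)$ is aspheric. I would show that each of (a), (b) and (c) unwinds, via the definition of an aspheric morphism (\emph{cf}.~\ref{3deffonctasph}), into the asphericity of a family of comma categories or fibres, and that in each case a canonical isomorphism identifies those categories with the categories $A(a_1,g)$; the three families then coincide with the family of (d). Apart from the axiom As1, used once, the whole argument is a bookkeeping of categories with no homotopical content.

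For (a)$\Leftrightarrow$(d): the functor $j_b:A_b\to b\backslash A$ is aspheric if and only if, for every object $(a_1,g:b\to u(a_1))$ of $b\backslash A$, the comma category $A_b/(a_1,g)$ of $j_b$ over $(a_1,g)$ is aspheric. Unwinding this comma category, an object is a pair $(a,h:a\to a_1)$ with $u(a)=b$ and $u(h)=g$, and a morphism $(a,h)\to(a',h')$ is an arrow $\phi:a\to a'$ with $u(\phi)=1_b$ and $h'\phi=h$; this is exactly the category $A(a_1,g)$ with $b_0=b$. As $b$ runs over the objects of $B$ and $(a_1,g)$ over $b\backslash A$, these comma categories form precisely the family of (d). For (b)$\Leftrightarrow$(d): fixing $a$ in $A$ and $b=u(a)$, the fibre of the induced functor $A/a\to B/b$ over an object $(b',q:b'\to b)$ of $B/b$ has as objects the arrows $f:a'\to a$ with $u(f)=q$ (so that $a'$ lies in $A_{b'}$), and as morphisms the arrows $\phi:a'\to a''$ with $u(\phi)=1_{b'}$ and $f''\phi=f$; this is canonically isomorphic to $A(a,q)$. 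As $a$ runs over the objects of $A$ and $(b',q)$ over $B/b$, these fibres again describe the family of (d).

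For (c)$\Leftrightarrow$(d) I would make the pullbacks explicit. A functor $\smp{1}\to B$ is the datum of an arrow $g:b_0\to b_1$ of $B$; the category $A'=A\times_B\smp{1}$ has as objects the pairs $(a,\e)$, $\e\in\{0,1\}$, with $u(a)=b_\e$, and $A''=A'\times_{\smp{1}}\smp{0}$ is the fibre over $0$, canonically isomorphic to $A_{b_0}$, the functor $A''\to A'$ being the inclusion $a\mapsto(a,0)$. For an object $(a',0)$ of $A'$ the comma category $A''/(a',0)$ has the final object $(a',1_{a'})$, hence is aspheric by As1; for an object $(a',1)$, with $a'$ in $A_{b_1}$, the comma category $A''/(a',1)$ is isomorphic to $A(a',g)$. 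Thus $A''\to A'$ is aspheric if and only if $A(a',g)$ is aspheric for every object $a'$ of $A_{b_1}$, and letting $g$ range over all arrows of $B$ recovers condition (d).

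The verifications are routine but notation-heavy. The one place where an axiom of the asphericity structure genuinely enters is the appeal to As1 to dispose of the fibres $A''/(a',0)$ in the proof of (c)$\Leftrightarrow$(d); everywhere else the work consists only in checking that the displayed correspondences respect composition and identities and hence are isomorphisms of categories. I therefore expect the main obstacle to be nothing more than keeping the data $(a,h,g,\dots)$ and the fibre conditions $u(\phi)=1$ straight, rather than any substantive difficulty.
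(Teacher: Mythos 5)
Your proof is correct and follows essentially the same route as the paper: both arguments identify the comma categories $A_{b_0}/(a_1,g)$ of $j_{b_0}$, the fibers of $A/a_1\to B/b_1$, and the comma categories $A''/a'$ over objects of the fiber $A'_1$ with the categories $A(a_1,g)$, and both dispose of the objects of $A'_0$ by exhibiting a final object (As1). The paper merely leaves the first two identifications to the reader, whereas you spell them out.
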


\begin{proof}
We leave it to the reader to check that for every arrow \hbox{$g:b_0\to b_1$}
of $B$, and every object $a_1$ of $A$ over $b_1$, the category
$A_{b_0}/(a_1,g)$ (defined by the functor $j_{b_0}:A_{b_0}\to b_0\backslash A$ 
and the object $(a_1,g:b_0\to u(a_1)=b_1)$ of $b_0\backslash A$),
as well as the fiber of $A/a_1\to B/b_1$ over the object $(b_0,g)$ of $B/b_1$,
are isomorphic to the category $A(a_1,g)$, which proves
the equivalence of conditions~(\emph{a}),~(\emph{b}) and~(\emph{d}).
Let us show the equivalence between~(\emph{c}) and~(\emph{d}). 
Observe that there is a one-to-one correspondence between arrows $g$ of $B$, as in~(\emph{d}), and functors $\smp{1}\to B$, as in~(\emph{c}). Using the notations of (\emph{c}), the inclusion $A''\to A'$ is, by definition, aspheric if and only if for every object $a'$ of $A'$, the category $A''/a'$ is aspheric. If $a'$ is an object of the fiber $A'_0\simeq A''$ of $A'$ over $0$, this is true without any hypothesis on $u$, for in that case $A''/a'$ has a final object. Therefore, it is sufficient to check this for $a'$ in the fiber $A'_1$ of $A'$ over $1$. In that case, $a'$ corresponds to an object $a_1$ of $A_{b_1}$, and we check that $A''/a'$ is isomorphic to the category $A(a_1,g)$ of~(\emph{d}). This completes the proof.
\end{proof}

\begin{cor}\label{3stchbaseflisse}
Weakly smooth morphisms are stable under base change,
\emph{i.e.}~for every cartesian square in $\cat$
$$
\xymatrix{
&A'\ar[r]\ar[d]_{u'}
&A\ar[d]^{u}
\\
&B'\ar[r]
&B
&\hskip -20pt,
}$$
if $u$ is weakly smooth, then so is $u'$.
\end{cor}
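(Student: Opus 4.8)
The plan is to prove Corollary~\ref{3stchbaseflisse} by reducing the stability-under-base-change statement to a fiberwise assertion, using the characterization of weakly smooth morphisms provided by condition~(c) of Proposition~\ref{3carflisse}. The key observation is that condition~(c) is itself formulated in terms of a diagram of cartesian squares over $\smp{0}\to\smp{1}\to B$; since cartesian squares compose, any such diagram for $u'$ over $B'$ automatically yields a corresponding diagram for $u$ over $B$ by pasting.

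First I would fix a cartesian square with $u$ weakly smooth and $u':A'\to B'$ its base change along some $w:B'\to B$. To verify that $u'$ is weakly smooth, I would apply the criterion~(c) to $u'$: given a diagram of cartesian squares
$$
\xymatrix{
&A''\ar[r]\ar[d]
&\ind{A'}{}\ar[r]\ar[d]
&A'\ar[d]^{u'}
\\
&\smp{0}\ar[r]
&\smp{1}\ar[r]_{g}
&B'
&\hskip -20pt,
}$$
I must show that $A''\to\ind{A'}{}$ is aspheric. The composite functor $\smp{1}\to B'\to B$, together with the pasting of the given cartesian squares against the defining cartesian square relating $A'$ and $A$, produces a diagram of cartesian squares of exactly the shape appearing in condition~(c) for $u$ over $B$, with the \emph{same} top-left arrow $A''\to\ind{A'}{}$ up to canonical isomorphism. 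Since $u$ is weakly smooth, condition~(c) applied to $u$ gives that this arrow is aspheric, which is precisely what is required for $u'$.

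The routine part is checking that the pasted squares are again cartesian and that the resulting objects $A''$ and $\ind{A'}{}$ coincide (canonically) with those obtained directly from $u'$; this is the standard "pasting lemma" for cartesian squares, which holds in $\cat$ because pullbacks compose. The only point demanding a little care is bookkeeping: one must make sure that the inclusion $\smp{0}\hookrightarrow\smp{1}$ is preserved unchanged throughout, so that the hypothesis of~(c) for $u$ matches exactly. I expect no serious obstacle, as the entire content is the compatibility of the criterion with composition of base changes; the main (and only) step is to recognize that Proposition~\ref{3carflisse}(c), being phrased via iterated cartesian squares, is manifestly closed under further base change, so that weak smoothness transfers immediately from $u$ to $u'$.
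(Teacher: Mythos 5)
Your proposal is correct and coincides with the paper's own argument: the proof there is exactly the one-line observation that condition~(\emph{c}) of Proposition~\ref{3carflisse} is stable under base change because the relevant cartesian squares paste. You have simply spelled out the routine pasting verification in more detail.
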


\begin{proof}
The corollary follows from the previous proposition, since
the condition ({\it c\/}) is stable under base change.
\end{proof}

\begin{prop}\label{3flisseloc}
Let $u:A\to B$ be a morphism of $\cat$. The following conditions
are equivalent:
\begin{itemize}
\item[(a)] $u$ is weakly smooth;
\item[(b)] for every object $a$ of $A$, the morphism $A/a\to B/b$,
$b=u(a)$, induced by $u$, is weakly smooth.
\end{itemize}
\end{prop}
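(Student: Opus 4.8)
The plan is to prove this as a consequence of the characterization in Proposition~\ref{3carflisse}, exploiting the fact that weak smoothness is a purely local notion (condition~(\emph{b}) of that proposition) together with the transitivity-type formula for slice categories of slice categories already recorded in the proof of Proposition~\ref{3asphlocbase}. The implication (\emph{a})$\Rightarrow$(\emph{b}) should be nearly immediate: if $u$ is weakly smooth, then by Proposition~\ref{3carflisse}(\emph{b}) the fibers of $A/a\to B/b$ are aspheric for every object $a$ of $A$; I must upgrade this to the statement that each induced morphism $A/a\to B/b$ is itself weakly smooth, which by the same proposition means checking that for every object of $A/a$ the relevant fibers are aspheric. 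The key computational input is the canonical isomorphism $(A/a)/(a',q)\simeq A/a'$ (an instance of the formula $(A/c)/(b,p)\simeq A/b$ used in Proposition~\ref{3asphlocbase}), compatible with the projection to $B/b$; this reduces the fibers appearing in the weak smoothness of $A/a\to B/b$ to fibers of $A/a'\to B/b'$, which are aspheric precisely because $u$ is weakly smooth.

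First I would fix an object $a$ of $A$, set $b=u(a)$, and write $p$ for the induced morphism $A/a\to B/b$. To apply Proposition~\ref{3carflisse}(\emph{b}) to $p$, I take an arbitrary object $(a',q:u(a')\to b)$ of $A/a$, and I must show that the fibers of the induced functor $(A/a)/(a',q)\to (B/b)/(b',r)$ are aspheric, where $(b',r)=p(a',q)$. Using the canonical isomorphisms $(A/a)/(a',q)\simeq A/a'$ and $(B/b)/(b',r)\simeq B/b'$, this functor is identified with the functor $A/a'\to B/b'$ induced by $u$. Its fibers are aspheric by Proposition~\ref{3carflisse}(\emph{b}) applied to $u$ at the object $a'$. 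Hence $p$ satisfies condition~(\emph{b}) of Proposition~\ref{3carflisse}, so $p$ is weakly smooth, which is exactly~(\emph{b}) of the present proposition.

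For the converse (\emph{b})$\Rightarrow$(\emph{a}), the same dictionary runs in reverse. Assuming each $A/a\to B/b$ is weakly smooth, I again use Proposition~\ref{3carflisse}(\emph{b}): to prove $u$ weakly smooth it suffices to show that for every object $a$ of $A$ the fibers of $A/a\to B/b$ are aspheric. But the fibers of a weakly smooth morphism are in particular aspheric --- indeed, applying condition~(\emph{b}) of Proposition~\ref{3carflisse} to the weakly smooth morphism $A/a\to B/b$ at its terminal-type object $(a,1_b)$ (whose slice $(A/a)/(a,1_b)\simeq A/a$ maps to $(B/b)/(b,1_b)\simeq B/b$, recovering $p$ itself) shows the fibers of $p$ are aspheric. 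So $u$ satisfies Proposition~\ref{3carflisse}(\emph{b}) and is weakly smooth.

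The main obstacle is bookkeeping rather than conceptual: I must verify that the canonical isomorphisms $(A/a)/(a',q)\simeq A/a'$ and $(B/b)/(b',r)\simeq B/b'$ are genuinely \emph{compatible} with the functors induced by $u$, so that the identification of the relevant fibers is legitimate, and that the object of $A/a$ chosen in the converse direction does recover $p$ under the slice-of-slice identification. Both facts are instances of the elementary computation already carried out in the proof of Proposition~\ref{3asphlocbase}, so once that compatibility is spelled out the proof reduces to two applications of Proposition~\ref{3carflisse}. I would therefore present the argument compactly, citing Propositions~\ref{3asphlocbase} and~\ref{3carflisse}, and leaving the explicit verification of the slice isomorphisms to the reader as is done elsewhere in the paper.
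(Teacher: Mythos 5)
Your proposal is correct and follows essentially the same route as the paper's own proof: both directions reduce to condition~(\emph{b}) of Proposition~\ref{3carflisse} via the canonical isomorphisms $(A/a)/(a',f)\simeq A/a'$ and $(B/b)/(b',g)\simeq B/b'$, and the converse uses the slice over the final object of $A/a$ to recover $A/a\to B/b$ itself. The only blemish is notational: an object of the slice $A/a$ is a pair $(a',f:a'\to a)$ with $f$ an arrow of $A$ (not $q:u(a')\to b$), and its final object is $(a,1_a)$ rather than $(a,1_b)$ --- neither slip affects the argument.
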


\begin{proof}
Assume that the functor $u$ is weakly smooth, and let
$a$ be an object of $A$, and $b=u(a)$. By condition
(\emph{b}) of proposition~\ref{3carflisse}, in order to prove that
the functor $A/a\to B/b$, induced by $u$, is weakly smooth,
it is sufficient to show that for every object $(a'\!,f:a'\to a)$ of
$A/a$, the fibers of the functor $(A/a)/(a'\!,f)\to(B/b)/(b'\!,g)$,
where $(b'\!,g)=(u(a'),u(f))$, are aspheric. 
Now, this functor is canonically isomorphic to the functor
$A/a'\to B/b'$, induced by $u$,
the fibers of which are aspheric,
by condition ({\it b\/}) of proposition~\ref{3carflisse}, hence the assertion.
Conversely, assume that for every object $a$ of~$A$, the functor
$A/a\to B/b$, $b=u(a)$,
induced by $u$, is weakly smooth. Let us show
that, in that case, so is $u$. By condition (\emph{b}) of proposition~\ref{3carflisse},
the hypothesis that $A/a\to B/b$ is weakly smooth implies that the fibers of
the functor $(A/a)/(a,1_a)\to(B/b)/(b,1_b)$ are aspheric. 
Now, the latter is canonically isomorphic to the functor $A/a\to B/b$, induced by $u$,
which proves that $u$ is weakly smooth, 
by condition ({\it b\/}) of proposition~\ref{3carflisse}.
\end{proof}

\begin{definition}\label{3deflisse}
A morphism $u:A\to B$ of $\cat$ is said to be $\Asph$\nobreakdash-\emph{smooth},
or more simply \emph{smooth}, if for every diagram of cartesian squares
$$
\xymatrix{
&A''\ar[r]^{v}\ar[d]
&A'\ar[r]\ar[d]
&A\ar[d]^{u}
\\
&B''\ar[r]_{w}
&B'\ar[r]
&B
&\hskip -20pt,
}$$
if the morphism $w$ is aspheric, then so is $v$.
\end{definition}

\begin{prop}\label{3sorlisse}
The class of smooth morphisms is stable under composition and base change. 
\end{prop}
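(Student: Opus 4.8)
The plan is to verify the two closure properties separately, each by a direct diagram-chase using the definition of smoothness (Definition~\ref{3deflisse}) together with the standard fact that cartesian squares paste.

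First I would treat stability under base change. Suppose $u:A\to B$ is smooth and let
$$
\xymatrixcolsep{1.8pc}
\xymatrix{
A'\ar[r]\ar[d]_{u'}
&A\ar[d]^{u}
\\
B'\ar[r]
&B
}
$$
be a cartesian square, so that $u':A'\to B'$ is the base change of $u$. To test smoothness of $u'$, consider any diagram of two cartesian squares over $u'$ with a base $w:B''\to B'$ aspheric, with top arrow $v:A''\to A'$. The point is that pasting this diagram onto the square above produces a diagram of cartesian squares over $u$, whose total base is the composite $B''\to B'\to B$, and whose middle-to-right portion is cartesian over $u$. Since $w$ is aspheric and the extra square contributes no new asphericity hypothesis, I would invoke smoothness of $u$ applied to the appropriate two-square subdiagram to conclude that $v$ is aspheric. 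The only care needed is to organize the pasted squares so that the hypothesis ``the base is aspheric'' lands exactly on $w$; this is routine, since the base change functors compose up to canonical isomorphism and cartesian squares paste to cartesian squares.

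Next I would treat stability under composition. Let $u:A\to B$ and $u':B\to C$ be smooth, and consider a diagram of cartesian squares over the composite $u'u:A\to C$:
$$
\xymatrix{
&A''\ar[r]^{v}\ar[d]
&A'\ar[r]\ar[d]
&A\ar[d]^{u'u}
\\
&C''\ar[r]_{w}
&C'\ar[r]
&C
&\hskip -20pt,
}
$$
with $w$ aspheric. The idea is to interpolate the intermediate category $B$: form the pullbacks of $B\to C$ along $C'\to C$ and $C''\to C'$, obtaining $B'$ and $B''$ fitting into cartesian squares, and factor each cartesian square over $u'u$ as a vertical composite of a cartesian square over $u$ on top and a cartesian square over $u'$ on the bottom. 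Applying smoothness of $u'$ to the bottom row with base $w$ aspheric yields that the induced map $B''\to B'$ is aspheric; then applying smoothness of $u$ to the top row, now with aspheric base $B''\to B'$, yields that $v$ is aspheric, as required.

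The main obstacle I anticipate is purely bookkeeping: setting up the interpolated category $B$ and its pullbacks so that all the intervening squares are genuinely cartesian and paste correctly, and checking that the asphericity hypothesis propagates in the right order (first obtain asphericity of the intermediate base map from smoothness of $u'$, then feed it into smoothness of $u$). No conceptual difficulty arises beyond the repeated use of the pasting law for cartesian squares; the verification that the relevant composite squares are cartesian is the one place where one must be slightly attentive rather than fully automatic.
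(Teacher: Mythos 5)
Your argument is correct and is exactly the formal pasting argument that the paper has in mind when it dismisses the proof as ``a formal consequence of the definition'': for base change one pastes the test diagram onto the given cartesian square and reads off a test diagram for $u$, and for composition one interpolates the pullbacks of $B$ to split each test square into a square over $u'$ (yielding asphericity of $B''\to B'$) followed by one over $u$. Both closures check out, so the proposal matches the paper's (unwritten) proof.
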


\begin{proof}
It is a formal consequence of the definition.
\end{proof}

\begin{prop}\label{3lisseflisse}
A smooth morphism is weakly smooth. 
\end{prop}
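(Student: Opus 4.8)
The plan is to deduce weak smoothness from the characterization~(c) of Proposition~\ref{3carflisse}, which tests weak smoothness against the single base change
$$
\xymatrix{
&A''\ar[r]\ar[d]
&A'\ar[r]\ar[d]
&A\ar[d]^{u}
\\
&\smp 0\ar[r]^{w}
&\smp 1\ar[r]
&B
&\hskip -20pt,
}
$$
where $w:\smp 0\to\smp 1$ is the inclusion $\{0\}\hookrightarrow\{0\to 1\}$. The observation driving the argument is that this is nothing but a very special instance of the defining condition for smoothness (Definition~\ref{3deflisse}): it is precisely the diagram of cartesian squares whose bottom row is $\smp 0\to\smp 1\to B$.

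Concretely, let $u:A\to B$ be smooth. To verify condition~(c) of Proposition~\ref{3carflisse}, I would take an arbitrary such diagram of cartesian squares with bottom-left arrow $w:\smp 0\to\smp 1$ the inclusion $\{0\}\hookrightarrow\{0\to 1\}$, and invoke the smoothness of $u$: by Definition~\ref{3deflisse}, since $w$ is the left vertical base-change arrow, the asphericity of $w$ forces the asphericity of the top-left arrow $A''\to A'$. Granting that $w$ is aspheric, condition~(c) of Proposition~\ref{3carflisse} holds, and hence $u$ is weakly smooth.

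The only point left to check is therefore the asphericity of the inclusion $w:\smp 0\hookrightarrow\smp 1$, and I expect this to be the (minor) crux. The cleanest route is to note that $w$ sends the unique object of $\smp 0=e$ to the initial object $0$ of $\smp 1$; the unique functor $\smp 1\to\smp 0$ is then a right adjoint to $w$, the adjunction bijections $\Hom_{\smp 1}(0,b)\simeq\Hom_{\smp 0}(\ast,\ast)$ holding because both sides are singletons, $0$ being initial in $\smp 1$. By Proposition~\ref{3adjgasph}, a functor admitting a right adjoint is aspheric, so $w$ is aspheric. (Alternatively, one checks directly that the comma categories $\smp 0/0$ and $\smp 0/1$ are both one-object categories with a final object, hence aspheric by~As1.) No serious obstacle arises here: the substance of the proposition is simply the remark that the test diagram singling out weak smoothness is itself a smoothness instance over an aspheric base.
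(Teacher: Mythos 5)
Your proof is correct and is exactly the paper's argument: the paper simply notes that weak smoothness follows from condition~(\emph{c}) of Proposition~\ref{3carflisse}, which is the special case of Definition~\ref{3deflisse} with bottom row $\smp{0}\to\smp{1}\to B$. You supply the one detail the paper leaves implicit, namely that the inclusion $\smp{0}\hookrightarrow\smp{1}$ is aspheric (via Proposition~\ref{3adjgasph} or a direct check of the comma categories), and that verification is right.
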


\begin{proof}
It follows from condition (\emph{c}) of proposition~\ref{3carflisse}.
\end{proof}

\begin{prop}\label{3isoloclisse}
A local isomorphism is a smooth morphism.
\end{prop}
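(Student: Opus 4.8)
The plan is to unwind the definition of smoothness (\ref{3deflisse}) and reduce the asphericity of $v$ to that of $w$ by a direct identification of comma categories, exploiting that a local isomorphism is, by definition, a morphism $u:A\to B$ such that for every object $a$ of $A$ the induced functor $A/a\to B/u(a)$ is an isomorphism of categories.

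First I would check that local isomorphisms are stable under base change, so that in the diagram of \ref{3deflisse} the morphism $u':A'\to B'$ is again a local isomorphism. For this, given the cartesian square defining $A'=A\times_B B'$ over $w$, one has for every object $(a,b')$ of $A'$ a canonical isomorphism $A'/(a,b')\simeq(A/a)\times_{B/u(a)}(B'/b')$; since $A/a\to B/u(a)$ is an isomorphism by hypothesis, its base change $A'/(a,b')\to B'/b'$ is an isomorphism too, which is exactly the assertion that $u'$ is a local isomorphism.

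The core of the argument is the left-hand cartesian square $A''=A'\times_{B'}B''$ with $v$ the first projection. Fixing an object $a'$ of $A'$ and writing $b'=u'(a')$, I would use the identity $A''/a'=A''\times_{A'}(A'/a')$ together with $A''=A'\times_{B'}B''$ to obtain the chain of canonical isomorphisms
$$A''/a'\;\simeq\;B''\times_{B'}(A'/a')\;\simeq\;B''\times_{B'}(B'/b')\;=\;B''/b'\quad,$$
where the middle isomorphism is induced by the slice isomorphism $u'/a':A'/a'\to B'/b'$ coming from the local isomorphism $u'$, and the final equality is the very definition of the comma category $B''/b'$ attached to $w$. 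Thus $A''/a'$ is isomorphic to $B''/u'(a')$ for every object $a'$ of $A'$.

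To conclude, since $w$ is aspheric the category $B''/b'$ is aspheric for every object $b'$ of $B'$ (\ref{3deffonctasph}), hence $A''/a'\simeq B''/u'(a')$ is aspheric for every $a'$, which says precisely that $v$ is aspheric. The only delicate point — and the step I expect to require care — is the slice bookkeeping in the displayed chain: one must verify that the forgetful functor $A'/a'\to B'$ corresponds, under the isomorphism $u'/a'$, to the forgetful functor $B'/b'\to B'$, so that the fibre product over $B'$ can legitimately be transported across this isomorphism. Everything else is a formal manipulation of the cartesian squares and of the definitions already established.
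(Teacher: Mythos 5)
Your proof is correct and takes essentially the same route as the paper: both reduce to the identification of $A''/a'$ with $B''/b'$ via the compatibility of slice categories with fiber products and the slice isomorphisms coming from the local isomorphism hypothesis. The paper merely packages this by slicing the whole double diagram of cartesian squares at once (observing that all three vertical functors $A/a\to B/b$, $A'/a'\to B'/b'$, $A''/a'\to B''/b'$ are isomorphisms), whereas you factor it through the base-change stability of local isomorphisms first; the content is the same.
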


\begin{proof}
Let $u:A\to B$ be a local isomorphism, \emph{i.e.}~an arrow
$u:A\to B$ of $\cat$ such that, for every object $a$ of $A$, the functor
$A/a\to B/b$, $b=u(a)$, induced by $u$, is an isomorphism,
and consider the diagram of cartesian squares
$$
\xymatrix{
&A''\ar[r]^{v'}\ar[d]_{u''}
&A'\ar[r]^{v}\ar[d]^{u'}
&A\ar[d]^{u}
\\
&B''\ar[r]_{w'}
&B'\ar[r]_w
&B
&\hskip -20pt,
}$$
where we assume that the morphism $w'$ is aspheric.
For every object $a'$ of $A'$, we deduce a diagram of cartesian squares
$$
\xymatrix{
&A''/a'\ar[r]\ar[d]
&A'/a'\ar[r]\ar[d]
&A/a\ar[d]
\\
&B''/b'\ar[r]
&B'/b'\ar[r]
&B/b
&\hskip -20pt,
}$$
where $a=v(a')$, $b'=u'(a')$, $b=u(a)=w(b')$, the vertical arrows
of which are isomorphisms. The category $B''/b'$ being aspheric by hypothesis,
so is $A''/a'$,
which proves the proposition.
\end{proof}

\begin{prop}\label{3loclisse}
Let $u:A\to B$ be a morphism of $\cat$. The following conditions are
equivalent:
\begin{itemize}
\item[(a)] $u$ is smooth;
\item[(b)] for every object $a$ of $A$, the morphism $A/a\to B/b$,
$b=u(a)$, induced by $u$, is smooth.
\end{itemize}
\end{prop}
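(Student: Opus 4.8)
The plan is to prove the equivalence by a ``comma-category reduction'' argument, exactly parallel to the proof of Proposition~\ref{3flisseloc} for weakly smooth morphisms, but now using the definition of smoothness (Definition~\ref{3deflisse}) in terms of base change and aspheric functors rather than the fiberwise criterion. The essential tool is the canonical isomorphism, used repeatedly in this section, between the comma category of a comma category and a single comma category: for an object $(a',f:a'\to a)$ of $A/a$, the category $(A/a)/(a',f)$ is canonically isomorphic to $A/a'$, compatibly with the functor induced by $u$ down to the corresponding identity on the base.

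First I would prove the implication \emph{(a)}$\Rightarrow$\emph{(b)}. Assuming $u$ is smooth, fix an object $a$ of $A$ with $b=u(a)$, and consider the induced functor $u_a:A/a\to B/b$. To show $u_a$ is smooth I would take an arbitrary diagram of two cartesian squares over $u_a$ with aspheric left base map $w$, and show the left vertical map $v$ is aspheric. The key step is to interpret this diagram: by Proposition~\ref{3asphlocbase}, a morphism is aspheric if and only if all its ``$/c$'' restrictions are aspheric, so I would localize at objects and use the isomorphism $(A/a)/(a',f)\simeq A/a'$. Splicing the given cartesian squares over $B/b\to B$ allows me to realize each local piece as a diagram of cartesian squares over $u$ itself; since $u$ is smooth and the relevant base map remains aspheric (asphericity of $w$ passes to the spliced diagram because $B/b\to B$ is itself a well-behaved functor, and splicing cartesian squares yields cartesian squares), the conclusion follows. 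The clean way to organize this is to observe that the whole diagram over $u_a$ can be reinterpreted, after composing with the projection $B/b\to B$, as a diagram over $u$.

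Conversely, for \emph{(b)}$\Rightarrow$\emph{(a)}, I would suppose every $u_a:A/a\to B/b$ is smooth and deduce smoothness of $u$. Given a diagram of cartesian squares over $u$ with $w:B''\to B'$ aspheric, I want $v:A''\to A'$ aspheric, i.e.\ by definition that $A''/a''$ is aspheric for every object $a''$ of $A''$. I would push the object $a''$ forward to an object $a$ of $A$ (via the top horizontal composite) and restrict the entire diagram to the slices over $a$, $a'$, $b'$, etc. Using again $(A/a)/(\text{--})\simeq A/(\text{--})$ and the fact that restricting cartesian squares to slices yields cartesian squares, the local diagram becomes one over $u_a$ with an aspheric base map; smoothness of $u_a$ then gives the asphericity of the relevant slice, hence of $A''/a''$.

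The main obstacle I anticipate is the bookkeeping of the splicing/restriction step: one must verify carefully that restricting a diagram of cartesian squares over $u$ to the comma categories over a chosen object produces a genuine diagram of cartesian squares over $u_a=A/a\to B/b$, and that asphericity of the base map is preserved under this operation. This is where Proposition~\ref{3asphlocbase} does the real work, translating ``aspheric'' into a pointwise condition on comma categories that is stable under the reduction. Once the commutation of ``base change'' with ``passing to slices'' is pinned down, both implications become formal consequences of Definition~\ref{3deflisse} together with the comma-category identities already established. I would therefore spend most of the effort making the identification of the restricted diagram precise, and leave the remaining verifications, which are of the same routine nature as those in Proposition~\ref{3flisseloc}, to the reader.
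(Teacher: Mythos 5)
Your implication (\emph{b})~$\Rightarrow$~(\emph{a}) is essentially the paper's argument and is fine: restrict the given diagram of cartesian squares to the slices over each object $a$ of $A$, use Proposition~\ref{3asphlocbase} to transfer asphericity of $w$ to $B''/b\to B'/b$ and back from $A''/a\to A'/a$ to $v$. The problem is in (\emph{a})~$\Rightarrow$~(\emph{b}). The ``clean way to organize this'' that you settle on --- reinterpreting a diagram of cartesian squares over $u_a:A/a\to B/b$ as a diagram of cartesian squares over $u$ by composing with the projection $B/b\to B$ --- is false, because the comparison square with vertices $A/a$, $A$, $B/b$, $B$ is \emph{not} cartesian: the fibre product $B/b\times_BA$ is $A/b$, not $A/a$. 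Pasting your squares onto this non-cartesian square therefore does not produce cartesian squares over $u$, and smoothness of $u$ cannot be invoked. (A telltale sign is that the object $a$ plays no role in your reinterpretation: your argument, if it worked, would only prove smoothness of $A/b\to B/b$, which is indeed an immediate consequence of stability under base change, Proposition~\ref{3sorlisse} --- but that is not the statement.) You gesture at the isomorphism $(A/a)/(a',f)\simeq A/a'$ as a possible repair, but you never say how it closes the gap between $A/a$ and $A/b$, and the bookkeeping you defer is exactly where the difficulty lives.

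The missing ingredient is Proposition~\ref{3isoloclisse} (a local isomorphism is smooth), which you never cite and which is placed immediately before this statement in the paper precisely for this purpose. The paper's route for (\emph{a})~$\Rightarrow$~(\emph{b}) is a two-step factorization: $A/b\to B/b$ is smooth because it is the base change of $u$ along $B/b\to B$ (Proposition~\ref{3sorlisse}), the canonical functor $A/a\to A/b$ is a local isomorphism and hence smooth (Proposition~\ref{3isoloclisse}), and the composite $A/a\to A/b\to B/b$ is then smooth by stability under composition (Proposition~\ref{3sorlisse} again). No object-by-object localization is needed in this direction. I recommend you replace your splicing argument by this factorization; your second half can stand as written.
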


\begin{proof}
Assume that $u$ is smooth, and let $a$ be an object of $A$,
$b=u(a)$. Stability of smooth morphisms under
base change (\ref{3sorlisse}) implies that the functor $A/b\to B/b$,
induced by $u$, is smooth. The canonical functor $A/a\to A/b$ being a local isomorphism, 
it follows from the previous proposition that
it is smooth. Stability of smooth morphisms under composition (\ref{3sorlisse}) thus
implies that the composite $A/a\to B/b$ is smooth.
\smallbreak

Conversely, assume that for every object $a$ of $A$, the functor 
$$A/a\toto B/b\ ,\qquad b=u(a)\ , $$
induced by $u$, is smooth, and consider a diagram of cartesian squares
$$
\xymatrix{
&A''\ar[r]^{v}\ar[d]
&A'\ar[r]\ar[d]
&A\ar[d]^{u}
\\
&B''\ar[r]_{w}
&B'\ar[r]
&B
&\hskip -20pt,
}$$
where $w$ is an aspheric functor. For every object $a$ of $A$, we
deduce a diagram of cartesian squares
$$
\xymatrix{
&A''/a\ar[r]\ar[d]
&A'/a\ar[r]\ar[d]
&A/a\ar[d]
\\
&B''/b\ar[r]
&B'/b\ar[r]
&B/b
&\hskip -20pt,
}$$
where $b=u(a)$. The functor $w$ being aspheric, so is
the morphism $B''/b\to B'/b$ (\ref{3asphlocbase}). The functor
$A/a\to B/b$ being smooth, it follows that $A''/a\to A'/a$ is aspheric.
Since this is true for every object $a$ of $A$, it follows from
proposition~\ref{3asphlocbase} that $v$ is aspheric, 
which finishes the proof.
\end{proof}

\begin{cor}
Locally aspheric functors are stable under smooth base change,
\emph{i.e.}~for every cartesian square in $\cat$
$$
\xymatrix{
&A'\ar[r]^{v}\ar[d]_{u'}
&A\ar[d]^{u}
\\
&B'\ar[r]_{w}
&B
&\hskip -20pt ,
}$$
if $u$ is smooth and $w$ is locally aspheric, then $v$ is locally aspheric.
\end{cor}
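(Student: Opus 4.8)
The plan is to reduce local asphericity of $v$ to the asphericity of functors between comma categories, and then to slice the given cartesian square so as to bring the smoothness of $u$ into play. First I would invoke the characterization of Proposition~\ref{3prop1fonctlocasph}\,(\emph{a}): the morphism $v:A'\to A$ is locally aspheric if and only if, for every object $a$ of $A$, the functor $v/a:A'/a\to A/a$ induced by $v$ is aspheric. So it suffices to fix an object $a$ of $A$, set $b=u(a)$, and prove that $v/a$ is aspheric.

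Next I would slice the given cartesian square over the object $a$. Exactly as in the proof of Proposition~\ref{3loclisse}, one obtains a canonical isomorphism $A'/a\simeq(A/a)\times_{B/b}(B'/b)$, hence a cartesian square
$$
\xymatrix{
A'/a\ar[r]^{v/a}\ar[d]
&A/a\ar[d]
\\
B'/b\ar[r]_{w/b}
&B/b
}
$$
whose right-hand vertical arrow $A/a\to B/b$ is the functor induced by $u$, whose bottom arrow is $w/b:B'/b\to B/b$ induced by $w$, and whose top arrow is $v/a$. I expect the main (though entirely routine) obstacle to be precisely this cartesian identification and the correct bookkeeping of the four induced functors: on objects one matches a pair $\bigl((x,f:x\to a),(y,g:w(y)\to b)\bigr)$ with $u(x)=w(y)$ and $u(f)=g$ to the object $\bigl((x,y),f\bigr)$ of $A'/a$, and dually on morphisms. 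This is the only place where the explicit description of the fiber product of categories is used, and it is where the conceptual content of the argument — that both local asphericity and smoothness localize over comma categories — is cashed out.

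Finally I would combine the hypotheses. Since $w$ is locally aspheric, the functor $w/b$ is aspheric by Proposition~\ref{3prop1fonctlocasph}\,(\emph{a}); and since $u$ is smooth, the functor $A/a\to B/b$ induced by $u$ is smooth by Proposition~\ref{3loclisse}. Applying Definition~\ref{3deflisse} to the cartesian square above, regarded as the left-hand square of a diagram of two cartesian squares whose right-hand square is the identity (hence trivially cartesian) square on $A/a\to B/b$, the asphericity of the bottom arrow $w/b$ forces the asphericity of the top arrow $v/a$. As this holds for every object $a$ of $A$, Proposition~\ref{3prop1fonctlocasph}\,(\emph{a}) yields that $v$ is locally aspheric, completing the argument.
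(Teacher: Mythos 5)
Your proposal is correct and follows essentially the same route as the paper's proof: slice the cartesian square over an object to obtain a cartesian square of comma categories, use Proposition~\ref{3loclisse} to see that the sliced functor induced by $u$ is smooth, and conclude from the definition of smoothness that the top arrow is aspheric. The only (immaterial) difference is that you slice over objects $a$ of $A$ via the codomain characterization of Proposition~\ref{3prop1fonctlocasph}\,(\emph{a}), whereas the paper slices over objects $a'$ of $A'$ using the definition of local asphericity directly.
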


\begin{proof}
For every object $a'$ of $A'$, we have a cartesian square in $\cat$
$$
\xymatrix{
&A'/a'\ar[r]\ar[d]
&A/a\ar[d]
\\
&B'/b'\ar[r]
&B/b
&\hskip -20pt ,
}$$
where $a=v(a')$, $b'=u'(a')$, and $b=u(a)=w(b')$. By the previous proposition,
if $u$ is smooth, then so is $A/a\to B/b$, and if  $w$ is locally aspheric,
then the functor $B'/b'\to B/b$ is aspheric, which proves that $A'/a'\to A/a$ is aspheric
and finishes the proof.
\end{proof}

\begin{paragr}\label{3cleftechn}
Let
$$
\xymatrix{
&A\ar[d]^{u}
\\
B'\ar[r]_{w}
&B
}$$
be a diagram in $\cat$. We can form the cartesian square
$$
\xymatrix{
&A'\ar[r]^{v}\ar[d]_{u'}
&A\ar[d]^{u}
\\
&B'\ar[r]_{w}
&B
&\hskip -20pt,
}$$
where $A'=B'\times_BA$. We can also form the ``$2$\nobreakdash-square''
$$
\UseAllTwocells
\xymatrix{
&A_0'\ar[r]^{v_0}\ar[d]_{u_0'}
&A\ar[d]^{u}
\\
&B'\ar[r]_{w}
&B\ulcompositemap<\omit>{\alpha}
&\hskip -20pt,
}$$
where $A'_0$ is the comma category whose objects are triples
$$(b',\,a,\,g:w(b')\to u(a))\quad ,\qquad b'\in\ob(B')\ ,\quad a\in\ob(A)\ ,\quad g\in\fl(B)\ ,$$
a morphism from $(b'_0,a_0,g_0)$ to $(b'_1,a_1,g_1)$ being a pair
$(g',f)$, where $g':b'_0\to b'_1$ is an arrow of $B'$, and $f:a_0\to a_1$ is an arrow of $A$,
such that the diagram
$$
\xymatrixcolsep{2.8pc}
\xymatrix{
w(b'_0)\ar[r]^{w(g')}\ar[d]_{g_0}
&w(b'_1)\ar[d]^{g_1}
\\
u(a_0)\ar[r]_{u(f)}
&u(a_1)
}$$
is commutative. The functors $u'_0$, $v_0$ are defined by
$$\begin{aligned}
&{u'_0(b',a,g)=b'\ ,\quad v_0(b',a,g)=a\ ,\qquad(b',a,g)\in\ob(A'_0)\ ,}\\
&{u'_0(g',f)=g'\ ,\hskip 18pt v_0(g',f)=f\ ,\hskip 27pt (g',f)\in\fl(A'_0)\ ,}
\end{aligned}$$
and the natural transformation $\alpha:wu'_0\to uv_0$ is defined by
$$\alpha_{(b',a,g)}=g:wu'_0(b',a,g)=w(b')\toto u(a)=uv_0(b',a,g)\ ,\quad(b',a,g)\in\ob(A'_0)\quad.$$
We fix an object $b'_0$ of $B'$, an object $a_1$ of $A$ and a
morphism $g:w(b'_0)\to u(a_1)$. We set $b_0=w(b'_0)$, $b_1=u(a_1)$.
We shall associate to these data three categories $C_0$, $C_1$, $C_2$,
and leave it to the reader to check that they are isomorphic.
\smallbreak

\emph{a}) \emph{Definition of $C_0$.}
The triple $(b'_0,a_1,g)$ is an object of $A'_0$, and
there is a canonical functor $A'\to A'_0$ which sends an object
$(b',a)$ of $A'$, $b'\in\ob(B')$, \hbox{$a\in\ob(A)$}, $w(b')=u(a)$,
to the object $(b',a,1_{u(a)})$ of $A'_0$. The category $C_0$ is the
category $A'/(b'_0,a_1,g)$.
\smallbreak

\emph{b}) \emph{Definition of $C_1$.}
The pair $(b'_0,g)$ is an object of $B'/b_1$, and there is a cartesian square
$$
\xymatrix{
&A'/a_1\ar[r]\ar[d]
&A/a_1\ar[d]
\\
&B'/b_1\ar[r]
&B/b_1
&\hskip -20pt.
}$$
The category
$C_1$ is the category $(A'/a_1)/(b'_0,g)$, defined by the left vertical arrow.
\smallbreak

\emph{c}) \emph{Definition of $C_2$.}
Let us consider the category $(B'/b'_0)^*$ obtained by adding a new final object to $B'/b'_0$, and the inclusion $B'/b'_0\hookrightarrow(B'/b'_0)^*$.
By the universal property of this construction, there
is a unique morphism $(B'/b'_0)^*\to B/b_1$ of $\cat$ such
that the image of the final object of $(B'/b'_0)^*$ under this functor  is
the final object $(b_1,1_{b_1})$ of $B/b_1$, and such that the square
$$\xymatrix{
&B'/b'_0\ar[r]\ar@{^{(}->}[d]
&B/b_0\ar[d]
\\
&(B'/b'_0)^*\ar[r]
&B/b_1
&\hskip -20pt
}$$
is commutative, where the horizontal upper arrow is induced by $w$, 
and the vertical arrow on the right is defined by $g$.
Let us consider the composite 
$$(B'/b'_0)^*\toto B/b_1\toto B\quad$$ 
of this functor
with the forgetful functor from $B/b_1$ to $B$,
and let us form the diagram of cartesian squares
$$
\xymatrix{
&A''\ar[d]\ar[r]
&\overline{A'}\ar[d]\ar[r]
&A\ar[d]
\\
&B'/b'_0\ar[r]
&(B'/b'_0)^*\ar[r]
&B
&\hskip -20pt.
}
$$
The category $C_2$ is the category $A''/a'_1$, where
$a'_1$ is the object of $\overline{A'}=(B'/b'_0)^*\times_BA$
which is over the final object of $(B'/b'_0)^*$ and whose projection 
in $A$ is $a_1$.
\end{paragr}

\begin{thm}\label{3carlisse}
Let $u:A\to B$ be an arrow of $\cat$. The following conditions
are equivalent:
\begin{itemize}
\item[(a)] $u$ is smooth;
\item[(b)] for every diagram of cartesian squares
$$
\xymatrix{
&A''\ar[r]^{v}\ar[d]
&A'\ar[r]\ar[d]
&A\ar[d]^{u}
\\
&B''\ar[r]_{w}
&B'\ar[r]
&B
&\hskip -20pt,
}$$
if the functor $w$ is aspheric with respect to the minimal right asphericity structure, \emph{i.e.}~if it has a right adjoint, then the morphism $v$ is aspheric;
\item[(c)] for every diagram of cartesian squares
$$
\xymatrix{
&A''\ar[r]^{v}\ar[d]
&A'\ar[r]\ar[d]
&A\ar[d]^{u}
\\
&B''\ar[r]_{w}
&B'\ar[r]
&B
&\hskip -20pt,
}$$
where $B''$ is a category which has a final object, $B'$ is obtained by adding
a new final object to $B''$, and $w$ is the canonical inclusion, the morphism $v$ is aspheric;
\item[(d)] for every diagram of cartesian squares
$$
\xymatrix{
&A''\ar[r]^{}\ar[d]_{u''}
&A'\ar[r]\ar[d]^{u'}
&A\ar[d]^{u}
\\
&B''\ar[r]_{}
&B'\ar[r]
&B
&\hskip -20pt,
}$$
and for every object $a'$ of $A'$, the morphism
$$A''/a'\toto B''/b'\quad,\qquad b'=u'(a')\quad,$$
induced by $u''$, is aspheric.
\end{itemize}
\end{thm}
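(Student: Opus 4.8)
The plan is to prove the cycle of implications $(a)\Rightarrow(b)\Rightarrow(c)\Rightarrow(d)\Rightarrow(a)$, with the first two and the last being essentially formal and the whole weight of the theorem resting on $(c)\Rightarrow(d)$, which is where the construction of paragraph~\ref{3cleftechn} is brought into play. For $(a)\Rightarrow(b)$ I would simply note that condition~$(b)$ is the restriction of condition~$(a)$ to those base changes along a functor $w$ admitting a right adjoint; since such a $w$ is aspheric by proposition~\ref{3adjgasph}, every diagram tested by $(b)$ is already tested by $(a)$, so the conclusion is inherited. For $(b)\Rightarrow(c)$ I would check that the inclusion $w:B''\hookrightarrow(B'')^*$ of a category possessing a final object into the category obtained by adjoining a new final object has a right adjoint, namely the functor restricting to the identity on $B''$ and sending the adjoined final object to a final object of $B''$ (this is exactly where the hypothesis that $B''$ has a final object is used). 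Hence the diagrams of~$(c)$ are special cases of those of~$(b)$.

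The heart of the argument is $(c)\Rightarrow(d)$. Fix a diagram of cartesian squares as in~$(d)$ and an object $a'$ of $A'$, and set $b'=u'(a')$. By definition the induced functor $A''/a'\to B''/b'$ is aspheric precisely when, for every object $(b'_0,q:w(b'_0)\to b')$ of $B''/b'$, the comma category $(A''/a')/(b'_0,q)$ is aspheric. I would feed the construction of~\ref{3cleftechn} with the morphism $u':A'\to B'$ in the role of $u$, with $w:B''\to B'$ in the role of the structural functor, and with the data $b'_0$, $a_1:=a'$, $g:=q$. With these choices the category $C_1$ of~\ref{3cleftechn} is exactly $(A''/a')/(b'_0,q)$, whereas $C_2$ is a comma category attached to a diagram of cartesian squares of the shape prescribed in~$(c)$, its base having final object $B''/b'_0$ and $(B''/b'_0)^*$ adjoined to it. Since $u'$ is a base change of $u$ and base change is transitive, this last diagram may be read as a $(c)$-shaped diagram over $u$ itself; condition~$(c)$ therefore applies and shows $C_2$ aspheric. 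The isomorphism $C_1\simeq C_2$ furnished by~\ref{3cleftechn} then yields that $C_1=(A''/a')/(b'_0,q)$ is aspheric, and letting $(b'_0,q)$ range over $B''/b'$ gives~$(d)$. This step is the main obstacle: it demands careful bookkeeping of the identifications of~\ref{3cleftechn}, in particular recognizing $(A''/a')/(b'_0,q)$ as $C_1$ and the relevant $(c)$-shaped comma category as $C_2$, together with the observation that a $(c)$-configuration over the base change $u'$ is again a $(c)$-configuration over $u$.

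Finally, for $(d)\Rightarrow(a)$, consider a diagram of cartesian squares as in~$(a)$ with $w$ aspheric, and fix an object $a'$ of $A'$ with $b'=u'(a')$. Condition~$(d)$ gives that the induced functor $A''/a'\to B''/b'$ is aspheric, while the asphericity of $w$ gives that $B''/b'$ is aspheric; condition~As2 then forces $A''/a'$ to be aspheric. As this holds for every object $a'$ of $A'$, the functor $v$ is aspheric, which is the conclusion of~$(a)$, and the cycle closes.
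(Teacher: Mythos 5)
Your proof is correct and follows essentially the same route as the paper: $(a)\Rightarrow(b)\Rightarrow(c)$ by the same formal observations about right adjoints, $(c)\Rightarrow(d)$ via the isomorphism $C_1\simeq C_2$ of paragraph~\ref{3cleftechn} applied over the base change $u'$ (the paper phrases this as ``condition $(c)$ is stable under base change'' before invoking the same isomorphism), and $(d)\Rightarrow(a)$ via As2. The only differences are cosmetic reductions to single squares in the paper versus your direct handling of the double squares.
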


\begin{proof}
The implications (\emph{a}) $\Rightarrow$ (\emph{b}) $\Rightarrow$ (\emph{c})
are clear. Let us show the implication (\emph{c}) $\Rightarrow$ (\emph{d}).
As the condition (\emph{c}) is stable under base change, it
is sufficient to show that if
$$
\xymatrix{
A'\ar[r]^{v}\ar[d]_{u'}
&A\ar[d]^{u}
\\
B'\ar[r]_{w}
&B
}$$
is a cartesian square, $a_1$ an object of $A$, $b_1=u(a_1)$, and
$(b'_0,\,g:w(b'_0)\to b_1)$ an object of $B'/b_1$, then the
category $(A'/a_1)/(b'_0,g)$ is aspheric. By~\ref{3cleftechn},
this category is isomorphic to the category $A''/a'_1$, where
$$
\xymatrix{
&A''\ar[d]\ar[r]
&\overline{A'}\ar[d]\ar[r]
&A\ar[d]
\\
&B'/b'_0\ar[r]
&(B'/b'_0)^*\ar[r]
&B
&
}
$$
is the diagram of cartesian squares considered in
\ref{3cleftechn}, (\emph{c}), and $a'_1$ is the object of $\overline{A'}$
over the final object of $(B'/b'_0)^*$ whose image in $A$ is
the object $a_1$. Now, by condition (\emph{c}), the
functor $A''\to\overline{A'}$ is aspheric. From this we deduce that
the category $A''/a'_1$ is aspheric, therefore so is the category $(A'/a_1)/(b'_0,g)$. 
\smallbreak

It remains to show the implication (\emph{d}) $\Rightarrow$ (\emph{a}).
As the condition (\emph{d}) is stable under base change, it is sufficient
to show that if
$$
\xymatrix{
A'\ar[r]^{v}\ar[d]_{u'}
&A\ar[d]^{u}
\\
B'\ar[r]_{w}
&B
}$$
is a cartesian square, where $w$ is an aspheric morphism, then $v$
is aspheric, too.\break 
Let~$a$ be an object of $A$. We need
to show that the category $A'/a$ is aspheric. Let $b=u(a)$.
By condition (\emph{d}), the morphism $A'/a\to B'/b$,
induced by $u'$, is aspheric. By hypothesis, the functor
$w$ is aspheric, hence the category $B'/b$ is aspheric, and
therefore so is $A'/a$, which concludes the proof.
\end{proof}

\begin{cor}\label{3lisselocasph}
A smooth functor is locally aspheric.
\end{cor}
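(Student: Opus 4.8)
The plan is to deduce Corollary~\ref{3lisselocasph} directly from the characterization of smoothness given in condition~(\emph{d}) of Theorem~\ref{3carlisse}, combined with the definition of locally aspheric functors in~\ref{3deffonctlocasph}. Recall that a functor $u:A\to B$ is locally aspheric precisely when, for every object $a$ of $A$, the induced functor $A/a\to B/b$ (with $b=u(a)$) is aspheric. So the whole task reduces to extracting this statement from smoothness.

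First I would specialize condition~(\emph{d}) of Theorem~\ref{3carlisse} to the simplest possible diagram of cartesian squares, namely the one in which $B'=B''=B$ and both horizontal maps at the bottom are the identity. In that case both top horizontal maps are also identities, $A''=A'=A$, and the functors $u''$ and $u'$ both coincide with $u$ itself. Condition~(\emph{d}) then asserts that for every object $a'$ of $A''=A$, the morphism
$$A/a'\toto B/b'\quad,\qquad b'=u(a')\quad,$$
induced by $u$, is aspheric. Since $u$ is assumed smooth, condition~(\emph{a}) $\Rightarrow$ (\emph{d}) of Theorem~\ref{3carlisse} guarantees that this holds.

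Comparing this conclusion with the definition of locally aspheric in~\ref{3deffonctlocasph}, I observe that they are verbatim the same statement: for every object $a$ of $A$, the functor $A/a\to B/b$ with $b=u(a)$ is aspheric. Hence $u$ is locally aspheric, and the corollary follows.

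I do not expect any genuine obstacle here, since all the work has already been done in Theorem~\ref{3carlisse}; the only subtlety is bookkeeping, namely choosing the degenerate diagram of cartesian squares (with trivial bottom row) that makes condition~(\emph{d}) collapse exactly onto the definition of local asphericity. The one point to verify carefully is that, in this degenerate diagram, the relabelling of objects $a'$ of $A''$ as objects $a$ of $A$ is compatible, i.e.\ that $u'$ and $u''$ both reduce to $u$ under the identifications $A''=A'=A$ and $B''=B'=B$; this is immediate from the definition of the cartesian squares.
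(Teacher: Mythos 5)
Your proof is correct and is exactly the argument the paper intends: the paper's proof simply says the corollary is an immediate consequence of condition~(\emph{d}) of theorem~\ref{3carlisse}, and your explicit specialization to the degenerate diagram with identity bottom maps is precisely how that implication is meant to be read. No issues.
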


\begin{proof}
It is an immediate consequence of condition (\emph{d}) of the previous theorem.
\end{proof}

\begin{prop}\label{3carfibr}
Let $u:A\to B$ be a morphism of $\cat$. The following conditions are
equivalent:
\begin{itemize}
\item[(a)] u is a fibration;
\item[(b)] u is smooth with respect to the minimal right asphericity structure;
\item[(c)] for every diagram of cartesian squares
$$
\xymatrix{
&A''\ar[r]^{v}\ar[d]
&A'\ar[r]\ar[d]
&A\ar[d]^{u}
\\
&B''\ar[r]_{w}
&B'\ar[r]
&B
&\hskip -20pt,
}$$
if the functor $w$ has a right adjoint, then so has $v$;
\item[(d)] for every diagram of cartesian squares
$$
\xymatrix{
&A''\ar[r]\ar[d]
&A'\ar[r]\ar[d]
&A\ar[d]^{u}
\\
&\smp{1}\ar[r]
&\smp{2}\ar[r]
&B
&\hskip -20pt,
\hskip 30pt}$$
where $\smp{1}\to\smp{2}$ stands for the inclusion $\{0\to 1\}\hookrightarrow\{0\to 1\to2\}$, 
the functor $A''\to A'$ has a right adjoint.
\end{itemize}
\end{prop}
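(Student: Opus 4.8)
The plan is to establish the cycle (a) $\Rightarrow$ (c) $\Rightarrow$ (d) $\Rightarrow$ (a), together with the equivalence (b) $\Leftrightarrow$ (c). The latter is essentially free: for the minimal right asphericity structure a morphism is aspheric if and only if it has a right adjoint (\ref{3carfonctasphmin}), so, reading theorem~\ref{3carlisse} with $\Asph$ equal to the minimal structure, condition~(a) there becomes condition~(b) here, while condition~(b) there (``for cartesian squares with $w$ having a right adjoint, $v$ is aspheric'') becomes exactly condition~(c) here; thus the equivalence (a)\,$\Leftrightarrow$\,(b) of theorem~\ref{3carlisse} is our (b)\,$\Leftrightarrow$\,(c). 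For (c) $\Rightarrow$ (d) I would only note that the inclusion $\smp{1}\hookrightarrow\smp{2}$ has a right adjoint, namely the functor $\smp{2}\to\smp{1}$ fixing $0$ and $1$ and sending $2$ to $1$; condition~(d) is then the instance of~(c) in which $w$ is this inclusion.

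For (a) $\Rightarrow$ (c) I would construct a right adjoint of $v$ explicitly. As fibrations are stable under base change, the functor $u':A'\to B'$ induced from $u$ by the right-hand cartesian square is again a fibration, so every arrow of $B'$ admits hypercartesian lifts along $u'$. Let $s:B'\to B''$ be a right adjoint of $w$, with counit $\e:ws\to 1_{B'}$. For an object $a'$ of $A'$ over $b'=u'(a')$, I would take a hypercartesian lift $\kappa_{a'}:\rho(a')\to a'$ of $\e_{b'}:ws(b')\to b'$; since $u'(\rho(a'))=ws(b')$ lies in the image of $w$, the pair $(s(b'),\rho(a'))$ is an object of $A''=B''\times_{B'}A'$, and I would set $r(a')=(s(b'),\rho(a'))$. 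Naturality of $\e$ together with the uniqueness built into the hypercartesian property turns $r$ into a functor, and the adjunction bijection $\Hom_{A''}(x,r(a'))\simeq\Hom_{A'}(v(x),a')$ is obtained by combining the adjunction $w\dashv s$ on the base with the universal property of $\kappa_{a'}$: a morphism $v(x)\to a'$ lying over some arrow $w(c)\to b'$ corresponds, under $w\dashv s$, to an arrow $c\to s(b')$ of $B''$, and then factors uniquely through $\kappa_{a'}$ by hypercartesianity. Checking the triangle identities is routine.

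The implication (d) $\Rightarrow$ (a) is the main obstacle. I would read~(d) as the statement that, for every composable pair $\beta_0\to\beta_1\to\beta_2$ of $B$, with first arrow $h$ and second arrow $q$, the full inclusion into $A'=\smp{2}\times_B A$ of the part $A''$ lying over $\{0,1\}$ has a right adjoint; equivalently, for each object $z$ of $A$ over $\beta_2$ the comma category whose objects are the triples $(k,y,\phi)$, with $k\in\{0,1\}$, $y$ over $\beta_k$ and $\phi:y\to z$ over the composite $\beta_k\to\beta_2$, has a terminal object. Taking first $h=1_{\beta_1}$ (so $\beta_0=\beta_1$), that comma category is nonempty, which already provides a lift $y\to z$ of $q$; as there are then objects over $1$ and $\smp{2}$ has no arrow from $1$ to $0$, the terminal object must lie over $1$, and its universal property with respect to objects over $1$ says precisely that the associated arrow $\bar c:\bar z\to z$ is a \emph{cartesian} lift of $q$. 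This yields existence of cartesian lifts. For a general pair $(h,q)$, the terminal object is again this cartesian lift of $q$ (the same up to canonical isomorphism, independently of $h$), and its universal property with respect to objects over $0$ now reads: for every $\phi:y\to z$ over $qh$, with $y$ over $\beta_0$, there is a unique $g:y\to\bar z$ over $h$ with $\bar c\,g=\phi$. As $h$ ranges over all arrows into $\beta_1$, this is exactly the assertion that $\bar c$ is \emph{hypercartesian}. Hence every arrow of $B$ has hypercartesian lifts, so by the characterization recalled in~\ref{3catcof} the functor $\op{u}$ is a cofibration, that is, $u$ is a fibration, and the cycle is closed.
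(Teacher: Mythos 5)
Your proposal is correct and follows essentially the same route as the paper: the same cycle with (b)\,$\Leftrightarrow$\,(c) tautological, (c)\,$\Rightarrow$\,(d) immediate from the adjoint to $\smp{1}\hookrightarrow\smp{2}$, and (d)\,$\Rightarrow$\,(a) by the identical two-step analysis of $A''/a'_2$ (the degenerate case $g_0=1_{b_1}$ yielding cartesian lifts, the general case upgrading them to hypercartesian ones). Your (a)\,$\Rightarrow$\,(c), building the right adjoint of $v$ from hypercartesian lifts of the counit of $w\dashv s$, is just a repackaging of the paper's (a)\,$\Rightarrow$\,(b), which exhibits the final object of $A'/a_1$ as a hypercartesian lift of the final object of $B'/b_1$ --- the same construction by~\ref{3carfonctasphmin}.
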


\begin{proof}
Since aspheric functors with respect to the minimal right asphericity
structure are exactly those which have a right adjoint (\ref{3carfonctasphmin}),
equivalence between conditions (\emph{b}) and (\emph{c}) is tautological.
As the implication (\emph{c}) $\Rightarrow$ (\emph{d}) is clear,
it is sufficient to prove the implications (\emph{a}) $\Rightarrow$ (\emph{b})
and (\emph{d}) $\Rightarrow$ (\emph{a}).
\smallbreak

Since fibrations are stable under base change, 
in order to show the implication (\emph{a}) $\Rightarrow$ (\emph{b})
it is sufficient to show that for every cartesian square
$$
\xymatrixcolsep{2.3pc}
\xymatrix{
A'=B'\times_BA\ar[r]\ar@<2.3ex>[d]
&A\ar[d]^{u}
\\
\hskip 22pt B'\ar[r]_(.6){w}
&B
&\hskip -20pt,
}$$
if $u$ is a fibration, and if for every object $b$ of $B$, the
category $B'/b$ has a final object, then for every object $a$
of $A$, the category $A'/a$ has a final object.
\smallbreak

Accordingly, let $a_1$ be an object of $A$, $b_1$ its image in $B$,
and $(b'_0,\,g:w(b'_0)\to b_1)$ a final object of $B'/b_1$. Since
$u$ is a fibration, there exists a hypercartesian morphism
$k:a_0\to a_1$ of $A$
such that $u(k)=g$. We
shall show that $\bigl((b'_0,a_0),\,k:a_0\to a_1\bigr)$ is a final object of $A'/a_1$.
Let $\bigl((b',a),\,f:a\to a_1\bigr)$ be an object of $A'/a_1$, $w(b')=u(a)$.
We have to show that there exists an arrow of $A'/a_1$ whose domain is
$\bigl((b',a),f\bigr)$ and whose codomain is $\bigl((b'_0,a_0),k\bigr)$, and that it is unique. In other words, 
we have to show that there exists a unique pair $(g',h)$, where \hbox{$g':b'\to b'_0$} is an
arrow of $B'$ and $h:a\to a_0$ is an arrow of $A$, such that
$w(g')=u(h)$ and $f=kh$. For such a pair, we have $u(f)=u(k)u(h)=gw(g')$,
which means that 
$$g':\bigl(b',\,u(f):w'(b')\to b_1\bigr)\toto\bigl(b'_0,\,g:w(b'_0)\to b_1\bigr)\quad$$
is an arrow of $B'/b_1$. 
Now, the hypothesis that $(b'_0,g)$ is a
final object of $B'/b_1$ implies that such a $g'$ exists and is unique. 
Since the morphism $k$ is hypercartesian, there exists a unique
arrow \hbox{$h:a\to a_0$} of $A$ such that $u(h)=w(g')$ and
$f=kh$, which proves the assertion.
\smallbreak

It remains to show the implication (\emph{d}) $\Rightarrow$ (\emph{a}).
Let $\smp{2}\to B$ be a functor defined by a pair of composable arrows
$$\xymatrixcolsep{2pc}\xymatrix{b_0\ar[r]^{g_0}&b_1\ar[r]^{g_1}&b_2}$$
of $B$. Let us form the diagram of cartesian squares which appears in the statement of condition (\emph{d}).
This condition means that for every object $a'_2$ of $A'$ whose 
image $a_2$ in $A$ is over $b_2$, the category
$A''/a'_2$ has a final object. Let us describe this category.
The set of objects of $A''/a'_2$ can be canonically identified with the disjoint sum
$A''_0{\scriptstyle\coprod} A''_1$, where
$$\begin{aligned}
&{A''_0=\{(a_0,f_0)\mid a_0\in\ob(A),\,u(a_0)=b_0,\,f_0:a_0\to a_2\in\fl(A),\,u(f_0)=g_1g_0\}}\quad,\\
&{A''_1=\{(a_1,f_1)\mid a_1\in\ob(A),\,u(a_1)=b_1,\,f_1:a_1\to a_2\in\fl(A),\,u(f_1)=g_1\}}\quad,\\
\end{aligned}$$
and for every $(a_0,f_0),\,(a'_0,f'_0)\in A''_0$, $(a_1,f_1),\,(a'_1,f'_1)\in A''_1$,
we have
$$\begin{aligned}
&{\Hom_{A''/a'_2}((a_0,f_0),(a'_0,f'_0))=\{g\mid g:a_0\to a'_0\in\fl(A),\,u(g)=1_{b_0},\,f_0=f'_0g\}\quad,}\\
&{\Hom_{A''/a'_2}((a_0,f_0),(a_1,f_1))=\{g\mid g:a_0\to a_1\in\fl(A),\,u(g)=g_0,\,f_0=f_1g\}\quad,}\\
&{\Hom_{A''/a'_2}((a_1,f_1),(a_0,f_0))=\varnothing\quad,}\\
&{\Hom_{A''/a'_2}((a_1,f_1),(a'_1,f'_1))=\{g\mid g:a_1\to a'_1\in\fl(A),\,u(g)=1_{b_1},\,f_1=f'_1g\}\quad.}
\end{aligned}$$
Since the category $A''/a'_2$ has a final object, it
is non-empty. The special case $b_0=b_1$ and
$g_0=1_{b_1}$ then shows that for every arrow $b_1\to b_2$ of
$B$ and for every object $a_2$ of $A$ over $b_2$, there exists a
morphism $a_1\to a_2$ of $A$ over $b_1\to b_2$. Coming back
to the general case
(\smash{$\xymatrixcolsep{1.3pc}\xymatrix{b_0\ar[r]^{g_0}&b_1\ar[r]^{g_1}&b_2}$}
being arbitrary), this implies that $A''_0$ and $A''_1$ are
non-empty sets. Since there is no morphism in
$A''/a'_2$ from an object of $A''_1$ to an object
of $A''_0$, we deduce that the final object $(a_1,f_1)$
of $A''/a'_2$ belongs to $A''_1$. This implies that
$(a_1,f_1)$ is also a final object of the full subcategory 
of $A''/a'_2$ whose objects are the objects of $A''_1$. This means
exactly that $f_1$ is a cartesian morphism
over $g_1$ and shows that $f_1$ is determined by the sole $g_1$
and does not depend on $g_0$. As $g_1$ is an arbitrary arrow of $B$
and $a_2$ an arbitrary object of the fiber of $A$ over the target
of $g_1$, this implies already that $u$ is a prefibration.
Now, $(a_1,f_1)$ is a final object of $A''/a'_2$. Therefore,
for every object $(a_0,f_0)$ of $A''/a'_2$ which belongs to the set
$A''_0$, $f_0:a_0\to a_2\in\fl(A)$, $u(f_0)=g_1g_0$,
there exists a unique morphism $g:(a_0,f_0)\to(a_1,f_1)$ of $A''/a'_2$.
In other words, there is a unique arrow $g:a_0\to a_1$ of $A$ such that
$f_1g=f_0$ and $u(g)=g_0$. Since $g_0$ is an arbitrary arrow
of $B$ whose codomain is the domain of $g_1$, and since $f_1$ does not depend
on $g_0$, it follows that the morphism
$f_1$ is hypercartesian, which finishes the proof.
\end{proof}

\begin{cor}
Fibrations are smooth functors (with respect to any right asphericity structure).
\end{cor}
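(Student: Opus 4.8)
The plan is to reduce the statement to a purely formal combination of three earlier results, introducing no new construction. The guiding observation is that the class of functors admitting a right adjoint coincides, by Example~\ref{3carfonctasphmin}, with the class of morphisms aspheric for the \emph{minimal} right asphericity structure, and that this very class appears as the \emph{hypothesis} both in the characterization of fibrations (Proposition~\ref{3carfibr}, condition~(c)) and in the characterization of $\Asph$\nobreakdash-smoothness (Theorem~\ref{3carlisse}, condition~(b)). The whole proof then consists in matching these hypotheses and comparing the two conclusions.

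Concretely, I would fix a fibration $u:A\to B$ and an arbitrary diagram of cartesian squares of the shape appearing in condition~(b) of Theorem~\ref{3carlisse}, with lower arrow $w$ and upper arrow $v$, under the assumption that $w$ has a right adjoint. By Proposition~\ref{3carfibr}, the hypothesis that $u$ is a fibration is equivalent to its condition~(c): for every such diagram, if $w$ has a right adjoint, then so does $v$. Applying this to the chosen diagram, I conclude that $v$ itself has a right adjoint.

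It then remains to invoke Proposition~\ref{3adjgasph}, according to which any functor admitting a right adjoint is $\Asph$\nobreakdash-aspheric. Hence $v$ is $\Asph$\nobreakdash-aspheric. Since the diagram was arbitrary, this is precisely condition~(b) of Theorem~\ref{3carlisse}, and therefore $u$ is $\Asph$\nobreakdash-smooth, for the arbitrary right asphericity structure~$\Asph$ fixed throughout this section.

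I do not anticipate any genuine obstacle, as the argument is entirely formal. The only point demanding a moment's attention is to recognize that the hypothesis ``$w$ has a right adjoint'' in Proposition~\ref{3carfibr}(c) is literally the same as the hypothesis ``$w$ is aspheric with respect to the minimal right asphericity structure'' in Theorem~\ref{3carlisse}(b), through the identification of Example~\ref{3carfonctasphmin}. Once this matching is made, the implication is immediate: the conclusion furnished by the fibration condition (``$v$ has a right adjoint'') is strictly stronger than the conclusion required for $\Asph$\nobreakdash-smoothness (``$v$ is $\Asph$\nobreakdash-aspheric''), the gap being bridged exactly by Proposition~\ref{3adjgasph}.
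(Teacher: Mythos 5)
Your argument is correct and is exactly the paper's own proof: both pass from condition~(c) of Proposition~\ref{3carfibr} to condition~(b) of Theorem~\ref{3carlisse} via Proposition~\ref{3adjgasph} and the identification in Example~\ref{3carfonctasphmin}. Nothing to add.
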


\begin{proof}
The corollary follows from condition (\emph{c}) of the
previous proposition, condition (\emph{b}) of theorem~\ref{3carlisse},
and from proposition~\ref{3adjgasph}.
\end{proof}

\begin{exemple}
If $\Asph$ is the right asphericity structure associated with a basic localizer
 $\W$ (example~\ref{3strasphlocfond}), 
Grothendieck's characterization of $\W$\nobreakdash-smooth functors
and theorem~\ref{3carlisse} imply that, if $u$ is a morphism of $\cat$, the following conditions
are equivalent:
\begin{itemize}
\item[(\emph{a})] $u$ is $\W$\nobreakdash-smooth;
\item[(\emph{b})] $u$ is weakly $\Asph$\nobreakdash-smooth;
\item[(\emph{c})] $u$ is $\Asph$\nobreakdash-smooth.
\end{itemize}
On the other hand, since there exist prefibrations which are not fibrations, 
proposition~\ref{3carfibr} and example~\ref{3exflisse}
show that if $\Asph$ is the minimal right asphericity structure,
then the class of $\Asph$\nobreakdash-smooth functors is a proper
subclass of the class of weakly $\Asph$\nobreakdash-smooth functors.
\end{exemple}

\section{Smooth functors and base change morphisms}

\begin{lemme}\label{3cartint}
Let $w:J\to I$ be a morphism of $\cat$ and $F:I\to\cat$ be a functor.
We then have a cartesian square
$$
\xymatrix{
&\int Fw\ar[r]^{\ind{w}{F}}\ar[d]_{\Cofibr{F\hskip -1pt w}}
&\int F\ar[d]^{\Cofibr{F}}
\\
&J\ar[r]_{w}
&I
&\hskip -20pt,
}$$
where $\Cofibr{Fw}$ and $\Cofibr{F}$ stand for the cofibrations associated with the functors
$Fw$ and $F$ respectively,
and $\ind{w}{F}$ stands for the functor 
$(j,a)\mapsto (w(j),a)$, $(j,a)\in\ob(\int Fw)$,
induced by $w$. 
\end{lemme}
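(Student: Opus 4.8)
The plan is to verify directly that the square is cartesian by checking the universal property on objects and morphisms, using the explicit description of Grothendieck's construction recalled in~\ref{3propadjGr}. The key observation is that $\int Fw$ is, by construction, precisely the pullback: its objects are pairs $(j,a)$ with $j\in\ob(J)$ and $a\in\ob(Fw(j))=\ob(F(w(j)))$, which is exactly the data of an object $j$ of $J$ together with an object $(w(j),a)$ of $\int F$ lying over $w(j)$. First I would make this identification precise at the level of objects: an object of the pullback $J\times_I\int F$ is a pair $(j,(i,a))$ with $w(j)=i=\Cofibr{F}(i,a)$, and the assignment $(j,a)\mapsto\bigl(j,(w(j),a)\bigr)$ gives the required bijection, compatible with the projections $\Cofibr{Fw}$ and $\ind{w}{F}$.

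Next I would carry out the same verification on morphisms. A morphism $(j,a)\to(j',a')$ of $\int Fw$ is a pair $(l,f)$ with $l:j\to j'$ in $J$ and $f:Fw(l)(a)\to a'$ in $Fw(j')=F(w(j'))$. Under $\ind{w}{F}$ this maps to the morphism $(w(l),f):(w(j),a)\to(w(j'),a')$ of $\int F$, since $F(w(l))(a)=Fw(l)(a)$, so the composition formula for $\int F$ restricts to that of $\int Fw$; under $\Cofibr{Fw}$ it maps to $l$. Conversely, a morphism in the pullback is a pair consisting of an arrow $l:j\to j'$ of $J$ and an arrow $(w(l),f):(w(j),a)\to(w(j'),a')$ of $\int F$ (the two being forced to agree over $I$ because $w(l)=w(l)$ automatically), and such data is exactly a morphism $(l,f)$ of $\int Fw$. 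This establishes that the square commutes and satisfies the universal property, hence is cartesian.

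The verification is essentially bookkeeping: one must only check that the compositions and identities match up, which is immediate from the composition formula $(k',f')\circ(k,f)=(k'k,f'\cdot F(k')(f))$ recalled in~\ref{3propadjGr}, together with the equality $F(w(l))=Fw(l)$. I do not anticipate any genuine obstacle; the only point requiring care is to confirm that no \emph{extra} compatibility condition arises in the pullback beyond equality of the images in $I$—and indeed there is none, since the fiber of $\Cofibr{F}$ over $w(j)$ is canonically $F(w(j))$, matching the fiber of $\Cofibr{Fw}$ over $j$. Thus the functor $(j,a)\mapsto(w(j),a)$ together with $\Cofibr{Fw}$ exhibits $\int Fw$ as the fiber product $J\times_I\int F$, which is the assertion.
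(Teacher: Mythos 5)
Your proposal is correct: the identification of $\int Fw$ with the fiber product $J\times_I\int F$ on objects and on morphisms, using the explicit description of Grothendieck's construction, is precisely the ``easy verification left to the reader'' that the paper's proof consists of. There is nothing to add; your bookkeeping on compositions and identities is the whole content of the lemma.
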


\begin{proof}
The lemma follows from an easy verification which is left to the reader.
\end{proof}

\begin{paragr}\label{3morchbase}
Let
$$
\mathcal{D}\,=\quad
\raise 20pt\vbox{
\xymatrixrowsep{1.8pc}
\xymatrix{
A'\ar[r]^{w}\ar[d]_(.47){u'}
&A\ar[d]^{u}
\\
B'\ar[r]_{v}
&B
}
}
\qquad\qquad
$$
be a cartesian square of $\cat$. For every functor $F:A\to\cat$,
we deduce a composite cartesian square
$$
\xymatrix{
\int Fw\ar[r]^{\ind{w}{F}}\ar[d]_{\Cofibr{Fw}}
&\int F\ar[d]^{\Cofibr{F}}
\\
A'\ar[r]^{w}\ar[d]_{u'}
&A\ar[d]^{u}
\\
B'\ar[r]_{v}
&B
}$$
(\emph{cf.}~\ref{3cartint}). For every object $b'$ of $B'$, the functor $\ind{w}{F}$
induces a functor
$$(\textstyle\int Fw)/b'\toto(\textstyle\int F)/v(b')\quad,$$
and we notice that
$$(\textstyle\int Fw)/b'=(\ctlhprim{u}w^*(F))(b')\qquad\hbox{and}\qquad
(\textstyle\int F)/v(b')=(v^*\ctlh{u}(F))(b')\quad$$
(\emph{cf.}~\ref{3defimdirh}). We deduce a morphism
$$\kappa^{}_{\mathcal{D}}:\ctlhprim{u}w^*\toto v^*\ctlh{u}\quad$$
of $\sHom\bigl(\sHom(A,\cat),\sHom(B',\cat)\bigr)$,
called {\it base change morphism associated with
the square\/} $\mathcal{D}$.
\end{paragr}

\begin{prop}\label{3chbaselisse}
Let
$$
\mathcal{D}\,=\quad
\raise 20pt\vbox{
\xymatrixrowsep{1.8pc}
\xymatrix{
A'\ar[r]^{w}\ar[d]_(.47){u'}
&A\ar[d]^{u}
\\
B'\ar[r]_{v}
&B
}
}
\qquad\qquad
$$
be a cartesian square of $\cat$, where $v$ is a smooth functor.
Then the base change morphism
$\kappa^{}_{\mathcal{D}}:\ctlhprim{u}w^*\to v^*\ctlh{u}$ is componentwise aspheric. 
In other words, for every functor $F:A\to\cat$ and for every object $b'$ of $B'$, the morphism
$$\kappa^{}_{\mathcal{D},F}(b'):(\ctlhprim{u}w^*(F))(b')\toto(v^*\ctlh{u}(F))(b')\quad$$
is aspheric.
\end{prop}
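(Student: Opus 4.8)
The plan is to recognise each component $\kappa^{}_{\mathcal{D},F}(b')$ as the comma-category functor attached to the base change of a fixed functor along the smooth morphism $v$, and then to apply condition~(d) of theorem~\ref{3carlisse}.

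First I would unwind the description given in~\ref{3morchbase}. By lemma~\ref{3cartint}, applied to the morphism $w:A'\to A$ and the functor $F$, the square with top arrow $\widetilde w:\int Fw\to\int F$ and vertical arrows $\Cofibr{Fw}$ and $\Cofibr{F}$ is cartesian; composing it vertically with the cartesian square $\mathcal{D}$ yields the cartesian square with top arrow $\widetilde w$ and vertical arrows the canonical projections $\int Fw\to B'$ and $\int F\to B$, so that $\int Fw\simeq B'\times_B\int F$. Together with the identifications $(\ctlhprim{u}w^*(F))(b')=(\int Fw)/b'$ and $(v^*\ctlh{u}(F))(b')=(\int F)/v(b')$ recalled in~\ref{3morchbase}, this exhibits $\kappa^{}_{\mathcal{D},F}(b')$ as the functor $(\int Fw)/b'\to(\int F)/v(b')$ induced by $\widetilde w$, the comma categories being formed relative to the two projections to $B'$ and to $B$.

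Next I would feed this into theorem~\ref{3carlisse}, condition~(d), applied to the smooth morphism $v:B'\to B$. For the required diagram of cartesian squares I take the rightmost square to be the identity square on $v$, and the leftmost square to be the cartesian square whose left vertical is $\widetilde w:\int Fw\to\int F$, whose right vertical is $v$, and whose horizontal arrows are the projections $\int Fw\to B'$ and $\int F\to B$; this square is cartesian precisely because $\int Fw\simeq B'\times_B\int F$. Condition~(d), read for the distinguished object $b'$ of $B'$ (whose image in $B$ is $v(b')$), then asserts exactly that the functor $(\int Fw)/b'\to(\int F)/v(b')$ induced by $\widetilde w$ is aspheric. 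This functor is $\kappa^{}_{\mathcal{D},F}(b')$, and since $F$ and $b'$ are arbitrary the morphism $\kappa^{}_{\mathcal{D}}$ is componentwise aspheric.

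The substance of the argument is carried entirely by theorem~\ref{3carlisse}, so I do not expect a genuine obstacle here: the only care needed is the bookkeeping required to match the source and target of $\kappa^{}_{\mathcal{D},F}(b')$ with the comma categories $(\int Fw)/b'$ and $(\int F)/v(b')$, and to recognise the base change of $\Cofibr{F}\colon\int F\to B$ along $v$ as the leftmost square in the diagram of condition~(d). In particular it is worth double-checking that the comma categories appearing in the base change morphism are formed with respect to the composite projections $u'\Cofibr{Fw}$ and $u\Cofibr{F}$, which is exactly what makes the pullback square fit the hypotheses of~\ref{3carlisse}.
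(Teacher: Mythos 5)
Your proposal is correct and follows essentially the same route as the paper: form the composite cartesian square with top arrow $\ind{w}{F}:\int Fw\to\int F$ and verticals $u'\Cofibr{Fw}$, $u\Cofibr{F}$ over $v:B'\to B$, and apply condition~(\emph{d}) of theorem~\ref{3carlisse} to the smooth morphism $v$. The only difference is that you spell out explicitly how to instantiate the two-square diagram of condition~(\emph{d}) (identity square on $v$ on the right, the transposed pullback square on the left), a step the paper leaves implicit.
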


\begin{proof}
Let $F:A\to\cat$ be a functor, and consider
the cartesian square
$$
\xymatrix{
&\int Fw\ar[r]^{\ind{w}{F}}\ar[d]_{u'\Cofibr{Fw}}
&\int F\ar[d]^{u\Cofibr{F}}
\\
&B'\ar[r]_{v}
&B
&\hskip -20pt.
}$$
Since $v$ is a smooth functor, it follows from condition
(\emph{d}) of theorem~\ref{3carlisse}
that for every object $b'$ of $B'$, the functor 
$\kappa^{}_{\mathcal{D},F}(b'):(\int Fw)/b'\to(\int F)/v(b')$, induced by $\ind{w}{F}$,
is aspheric, which proves the proposition.
\end{proof}

\begin{thm}\label{3carlisseder}
Let $u:A\to B$ be a morphism of $\cat$. The following conditions
are equivalent~:
\begin{itemize}
\item[(a)] $u$ is smooth;
\item[(b)] for every diagram of cartesian squares in $\cat$
$$
\xymatrix{
&A''\ar[r]\ar[d]
&B''\ar[d]
\\
&A'\ar[r]\ar[d]
&B'\ar[d]
\\
&A\ar[r]_u
&B
&\hskip -20pt,
}$$
the base change morphism associated with the upper square
is componentwise an aspheric functor.
\end{itemize}
\end{thm}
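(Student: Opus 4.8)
The plan is to prove the two implications separately, reducing each to a result already established: proposition~\ref{3chbaselisse} for (a)~$\Rightarrow$~(b), and condition~(d) of theorem~\ref{3carlisse} for (b)~$\Rightarrow$~(a). Everything hinges on one preliminary bookkeeping step, namely identifying the base change morphism attached to the upper square with the template of~\ref{3morchbase}. Denote by $u_1:A'\to B'$ and $u_2:A''\to B''$ the middle and top horizontal arrows of the diagram of~(b)---the two successive base changes of $u$---and by $p:A''\to A'$ and $q:B''\to B'$ the vertical arrows of the upper square. Overlaying that square on the square~$\mathcal{D}$ of~\ref{3morchbase} by matching corresponding positions, I find that the arrow playing the role of~``$v$'' is the bottom edge $u_1$, so that the base change morphism in question is the transformation $\kappa$ whose component at a functor $F:B''\to\cat$ is $\kappa^{}_F:p^{}_!\,u_2^*F\to u_1^*\,q^{}_!F$, a morphism in $\sHom(A',\cat)$.

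For (a)~$\Rightarrow$~(b), assuming $u$ smooth, the lower cartesian square exhibits $u_1$ as a base change of $u$, so $u_1$ is itself smooth by stability of smooth morphisms under base change (proposition~\ref{3sorlisse}). Since $u_1$ is exactly the~``$v$''-arrow of the upper square, proposition~\ref{3chbaselisse} applies verbatim and yields that the associated base change morphism is componentwise aspheric.

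For (b)~$\Rightarrow$~(a), I would verify condition~(d) of theorem~\ref{3carlisse}, observing first that the diagram occurring there is the diagram of~(b) drawn horizontally rather than vertically, so the two range over the same data. The key move is to feed hypothesis~(b) the constant functor $F=e:B''\to\cat$ with value the final category. Then $\int F$ is canonically isomorphic to $B''$ and $\int Fu_2$ to $A''$, the functor $\ind{u_2}{F}$ becoming $u_2$ itself; by the explicit description of the components recorded in the proof of proposition~\ref{3chbaselisse}, the value of $\kappa^{}_F$ at an object $a'$ of $A'$ collapses to the functor $A''/a'\to B''/b'$, $b'=u_1(a')$, induced by $u_2$, the comma categories being formed with the vertical arrows $p$ and $q$. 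This is precisely the functor appearing in condition~(d). Since~(b) makes it aspheric for every such diagram and every $a'$, condition~(d) holds and $u$ is smooth.

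The step I expect to be the main obstacle is this preliminary identification: orienting the upper square against the template of~\ref{3morchbase} so that~``$v$'' is matched with $u_1$ rather than with a vertical arrow, and then checking that the choice $F=e$ degenerates $\kappa^{}_F(a')$ to exactly the comma-category functor of~\ref{3carlisse}~(d). Once this is pinned down, both implications are short and formal.
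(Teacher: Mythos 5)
Your proof is correct and follows essentially the same route as the paper: (a)~$\Rightarrow$~(b) by combining stability of smooth morphisms under base change (proposition~\ref{3sorlisse}) with proposition~\ref{3chbaselisse}, and (b)~$\Rightarrow$~(a) by specializing to the constant functor with value $e$ so that the component of the base change morphism at $a'$ becomes the functor $A''/a'\to B''/b'$ of condition~(d) of theorem~\ref{3carlisse}. Your preliminary identification of the upper square with the template of~\ref{3morchbase} (matching the smooth arrow $u_1$ with ``$v$'') is exactly the implicit bookkeeping in the paper's argument, just made explicit.
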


\begin{proof}
Implication (\emph{a}) $\Rightarrow$ (\emph{b}) follows from
proposition~\ref{3chbaselisse} and from stability of
smooth morphisms under base change (proposition~\ref{3sorlisse}).
Conversely, we notice that condition (\emph{b}), applied
to the constant functor $B''\to\cat$ with value the final 
category $e$, implies that for every object $a'$ of $A'$,
if we denote its image in $B'$ by $b'$, the morphism $A''/a'\to B''/b'$
is aspheric. We deduce that $u$ fulfils condition
(\emph{d}) of theorem~\ref{3carlisse}, which proves the theorem.
\end{proof}

\begin{rem} 
Let
$$
\mathcal{D}\,=\quad
\raise 20pt\vbox{
\xymatrixrowsep{1.8pc}
\xymatrix{
A'\ar[r]^{w}\ar[d]_(.47){u'}
&A\ar[d]^{u}
\\
B'\ar[r]_{v}
&B
}
}
\qquad\qquad
$$
be a cartesian square of $\cat$, and
$$
\xymatrixrowsep{.7pc}
\xymatrixcolsep{2.9pc}
\UseAllTwocells
\xymatrix{
\sHom(A,\cat)\ddrcompositemap<\omit>{\hskip 7pt\kappa_{\mathcal{D}}}\ar[r]^{w^*}\ar[dd]_{u^{}_!}
&\sHom(A',\cat)\ar[dd]^{u'_!}
\\
&&u'_!w^*\ar[r]^{\kappa_{\mathcal{D}}}
&v^*u^{}_!
\\
\sHom(B,\cat)\ar[r]_{v^*}
&\sHom(B',\cat)
}$$
the base change morphism (\ref{3morchbase}).
One checks easily that the natural transformation
$$
\xymatrixrowsep{.7pc}
\xymatrixcolsep{2.9pc}
\UseAllTwocells
\xymatrix{
\CHOT{A}\ddrcompositemap<\omit>{\hskip 7pt\overline{\kappa}_{\mathcal{D}}}\ar[r]^{w^*}\ar[dd]_{u^{}_!}
&\CHOT{A'}\ar[dd]^{u'_!}
\\
&&u'_!w^*\ar[r]^(.45){\overline{\kappa}_{\mathcal{D}}}
&v^*u^{}_!\quad,
\\
\CHOT{B}\ar[r]_{v^*}
&\CHOT{B'}
}$$
induced by $\kappa_{\mathcal{D}}$ by localization, is
the ``base change morphism'' formally defined by
the adjunctions (\emph{cf.}~theorem~\ref{3imdirh}): the natural transformation $\overline{\kappa}_{\mathcal{D}}$
is the composite
$$
\xymatrixcolsep{2.9pc}
\xymatrix{
u'_!w^*\ar[r]^(.3){u'_!w^*\star\eta}
&u'_!w^*u^*u^{}_!=u'_!u'{}^*v^*u^{}_!\ar[r]^(.7){\varepsilon'\star v^*u^{}_!}
&v^*u^{}_!
}\quad,$$
where
$$\eta:1_{\CHOT{A}}\toto u^*u^{}_!\quad,\qquad\varepsilon':u'_!u'{}^*\toto1_{\CHOT{B'}}$$
denote the adjunction morphisms. Proposition~\ref{3chbaselisse}
implies that if the functor $v$ is smooth, then the natural transformation $\overline{\kappa}_{\mathcal{D}}$
is an isomorphism.
\end{rem}

\end{document}